\DeclareSymbolFont{AMSa}{U}{msa}{m}{n}
\DeclareMathSymbol{\square} {\mathord}{AMSa}{"03}
\DeclareSymbolFont{AMSb}{U}{msb}{m}{n}
\DeclareMathSymbol{\N}{\mathbin}{AMSb}{"4E}
\DeclareMathSymbol{\Z}{\mathbin}{AMSb}{"5A}
\DeclareMathSymbol{\R}{\mathbin}{AMSb}{"52}
\DeclareMathSymbol{\Q}{\mathbin}{AMSb}{"51}
\DeclareMathSymbol{\I}{\mathbin}{AMSb}{"49}
\DeclareMathSymbol{\C}{\mathbin}{AMSb}{"43}
\DeclareMathOperator{\sgn}{sgn}
\newtheorem{theorem}{Theorem}[section]
\newtheorem{lemma}[theorem]{Lemma}
\newtheorem{proposition}[theorem]{Proposition}
\newtheorem{corollary}[theorem]{Corollary}
\newtheorem{definition}[theorem]{Definition}
\newtheorem{remark}[theorem]{Remark}
\title{Fractional Loop Group and Twisted K-theory}
\author{Pedram Hekmati and Jouko Mickelsson}
\address{Department of Theoretical Physics, Royal Institute of Technology, SE-106
  91 Stockholm, Sweden; Department of Mathematics and Statistics,
  University of Helsinki, FI-00014 Helsinki, Finland}
\email{pedram@kth.se, jouko@kth.se}
\begin{document}

\begin{abstract}
We study the structure of abelian extensions of the group $L_qG$ of
$q$-differentiable loops (in the Sobolev sense), generalizing from the
case of central extension of the smooth loop group. This is motivated
by the aim of understanding the problems with current algebras in
higher dimensions. Highest weight modules are constructed for the Lie 
algebra. The construction is extended to the current algebra of 
supersymmetric Wess-Zumino-Witten model. An application to the twisted 
K-theory on $G$ is discussed.   
 
\end{abstract}

\maketitle

\section{Introduction}

The main motivation for the present paper comes from trying to
understand the representation theory of groups of gauge
transformations in higher dimensions than one. In the case of a
circle, the relevant group is the loop group $LG$ of smooth functions
on the unit circle $S^1$ taking values in a compact Lie group $G.$ In
quantum field theory one considers representations of a central
extension $\widehat{LG}$ of $LG;$ in case when $G$ is semisimple, this 
corresponds to an affine Lie algebra. The requirement that the energy is bounded
from below leads to the study of highest weight representations of
$\widehat{LG}.$ This part of the representation theory is well
understood, \cite{Kac}. 

In higher dimensions much less is known. Quantum field theory gives us 
a candidate for an extension of the gauge group $Map(M,G),$ the group
of smooth mappings from a compact manifold $M$ to a compact group $G.$ 
The extension is not central, but by an abelian ideal. The geometric
reason for this is that the curvature form of the determinant line bundle 
over the moduli space of gauge connections (the Chern class of which is 
determined by a quantum anomaly) is not homogeneous; it is not invariant 
under left (or right) translations,
\cite{Mic89}. 

There are two main obstructions when trying to extend the
representation theory of affine Lie algebras to the case of
$Map(M,\mathfrak g).$ The first is that there is no natural polarization giving 
meaning to the highest weight condition; on $S^1$ the polarization 
is given by the decomposition of loops to positive and negative 
Fourier modes. The second obstruction has to do with renormalization 
problems in higher dimensions. On the circle, with respect to the
Fourier polarization, one can use methods of canonical quantization 
for producing representations of the loop group; the only
renormalization needed is the normal ordering of quantities quadratic
in the fermion field, \cite{Lu76}, \cite{PrSe}. In higher dimensions 
further renormalization is needed, leading to an action of the gauge
group, not in a single Hilbert space, but in a Hilbert bundle over 
the space of gauge connections, \cite{Mic94}. 

In this paper we make partial progress in trying to resolve the two 
obstructions above. We consider instead of $LG$ the group $L_q G$ of
loops which are not smooth but only differentiable of order $0<q <
\infty$ in the Sobolev sense, the \it fractional loop group. \rm In the range 
$\frac12 \leq  q$ the usual theory of highest weight representations is 
valid, the cocycle determining the central extension is well defined down 
to the critical order $q=\frac12.$ However, for $q < \frac12$ we have 
to use again a ``renormalized'' cocycle defining an abelian extension 
of the group $L_q G,$ similar as in the case of $Map(M,G).$  
The renormalization means that the restriction of the 2-cocycle
to the smooth subgroup $LG \subset L_qG$ is equal to the 2-cocycle $c$ for
the central extension (affine Kac-Moody algebra) plus a coboundary $\delta \eta$
of a 1-cochain $\eta$ (the renormalization 1-cochain).
The 1-cochain is defined only on $LG$ and does not extend
to $L_qG$ when $q < 1/2,$ only  the sum $c+\delta \eta$ is well-defined
on $L_qG.$

The important
difference between $L_q G$ and $Map(M,G)$ is that in the former case
we still have  a natural polarization of the Lie algebra into positive 
and negative Fourier modes and we can still talk about highest weight 
modules for the Lie algebra $L_q \mathfrak g.$ 

Because of the existence of the highest weight modules for
$L_q\mathfrak g$ we can even define the supercharge operator $Q$ for 
the supersymmetric Wess-Zumino-Witten model. In the case of the
central extension the supercharge is
defined as a product of a fermion field on the circle and the gauge
current; this is well defined because the vacuum is annihilated both 
by the negative frequencies of the fermion field and the current, thus
when acting on the vacuum only the (finite number of) zero Fourier
modes remain. This property is still intact in the case of the
abelian extension of the current algebra. 

We can also introduce a family of supercharges, by a ``minimal coupling'' 
to a gauge connection on the loop group. In the case of central extension
the connections on $LG$ can be taken to be left invariant and they are
written as a fixed connection plus a left invariant 1-form $A$ on $LG$. 
The form $A$ at the identity element is identified as a vector in the 
dual $L\mathfrak g^*$ which again is identified, through an invariant
inner product, as a vector in $L\mathfrak g$. This vector in turn defines a 
$\mathfrak g$-valued 1-form on the circle. The left translations on $LG$ 
induce the gauge action on the potentials $A$. Modulo the action of
the group $\Omega G$ of based loops, the set of vector potentials on
the circle is equal to the group $G$ of holonomies. In this way 
the family of supercharges parametrized by $A$ defines an element 
in the twisted K-theory of $G.$ Here the twist is equal to an integral 
3-cohomology class on $G$ fixed by the level $k$ of the loop group 
representation, \cite{Mic02}. 

In the case of $L_q G$ and the abelian extension, the 
connections are not invariant under the action of $L_q G$ and thus we
have to consider the larger family of supercharges parametrized by the
space $\mathcal A$ of all connections of a circle bundle over $L_q G$. 
This is still an affine space, the extension of $L_q G$ acts on it. 
The family of supercharges transforms equivariantly under the extension 
and it follows that it can be viewed as an element in twisted K-theory 
of the moduli stack $\mathcal A//\widehat{L_q G}.$ This replaces the 
G-equivariant twisted K-theory on $\mathcal A//\widehat{LG}$ in case of the 
central extension $\widehat{LG},$ the latter being equivalent to twisted
G-equivariant K-theory on the group $G$ of gauge holonomies.      

The paper is organized as follows. In Section 2 we introduce the fractional
loop group and consider its role as the gauge group of a fractional Dirac-Yang-Mills
system. It turns out that the natural setting is a spectral triple in the sense of 
non-commutative geometry. Interestingly enough, similar attempts have been made 
recently, \cite{Her}. We move on to discuss the embedding $L_qG \subset GL_p$ 
in Section 3 and the construction of Lie algebra cocycles in Section 4. Finally the 
last two sections are devoted to extending the current algebra of the 
supersymmetric WZW model to the fractional case and discussing its application to the 
twisted K-theory of $G$. 

\section{Fractional Loop Group}

Let $G$ denote a compact semisimple Lie group and $\mathfrak{g}$ its Lie 
algebra. Fix a faithful representation $\rho:G\to GL(V)$ in a finite 
dimensional complex vector space $V$. 

\begin{definition} The fractional loop group $L_qG$ for real index $\frac 12 < q$ is defined  
to be the Sobolev space,
\[L_qG := H^q(S^1,G) = \{ g\in Map(S^1,G) \ | \ \left\|g\right\|_{2,q}^2 = 
\sum_{k\in \mathbb{Z}}(1+k^2)^q|\rho(g_k)|^2 <\infty\} \ ,\]
where $|\rho(g_k)|$ is the standard matrix norm of the $k$:th Fourier
component of $g:S^1 \to G$. The group operation is given by pointwise multiplication 
$(g_1g_2)(x) = g_1(x)g_2(x)$.
\end{definition}
There is a natural Hilbert Lie group structure on $L_qG$ for $\frac 12 < q$. It is defined 
by the Hilbert space completion of the Lie algebra of smooth maps $C^\infty(S^1,\mathfrak g)$ 
with respect to the Sobolev inner product. The exponential map $\text{exp}:H^q(S^1,\mathfrak g) \to H^q(S^1,G)$
provides a local chart near the identity and is extended to an atlas by left 
translations. For our purposes however, we will use a Banach topology on the Lie algebra,
\[L_q\mathfrak{g}= \{ X \in L^{\infty}(S^1,\mathfrak{g}) \ | \ ||X|| = ||X||_{\infty} +
||X||_{2,q} <\infty \ \} \ ,\]
where $L^{\infty}$ is the set of measurable essentially bounded functions and 
$|| \ ||_{\infty}$ denotes the supremum norm. This induces a Banach Lie group structure 
on $L_qG$, \cite{PrSe} and is a more appropriate topology for the applications in this paper.
In fact, we have natural inclusions of Lie groups $LG \subset L_qG\subset L^cG$, where $LG$ is 
the smooth loop group and $L^cG$ is the Banach-Lie group of continuous loops in $G$, see Section 6. 
In the next section, we will modify the definition 
of $L_qG$ with respect to a different norm, to allow for values $0 < q \leq \frac 12$. 

In Yang-Mills theory on the cylinder $S^1\times \mathbb{R}$, the loop group 
$LG$ appears as the group of (time-independent) local gauge transformations. 
Quantization of massless chiral fermions in external Yang-Mills fields breaks 
the local gauge symmetry, leading to a central extension of $LG$. In order 
to make sense of this in the fractional setting, we need a notion 
of fractional differentiation. The study of fractional calculus dates back to 
early 18th century and a comprehensive review can be found in \cite{Sam}. The 
transition to fractional calculus is by no means unique. There are several 
competing definitions, but many are known to coincide on overlapping domains. 
For functions on the real line the Riemann-Liouville fractional 
derivative is defined by
\[D^q_a \psi(x) = \frac{d^n}{dx^n}\left\{\frac{1}{\Gamma(n-q)}\int_a^x
\frac{\psi(y)}{(x-y)^{q-n+1}}dy\right\}\] 
for $n>q$ and $x>a$. On the circle however this definition proves
inconvenient since periodic functions are 
not mapped onto periodic ones. An operator that do preserve periodicity is 
the Weyl fractional derivative, 
\[\widetilde{D}^q \psi(x) = \sum_{k\in \mathbb{Z}} (ik)^q \psi_k e^{ikx} = 
\sum_{k\in \mathbb{Z}} e^{\frac{iq\pi}{2}\sgn(k)}|k|^q \psi_k e^{ikx}\]
for all $q\in\mathbb{R}$. In fact, by extending to the real line one shows that 
$\widetilde{D}^q$ and $D^q_a$ coincide for $-1<q$ and 
$a=-\infty$ on an appropriate domain. For our purposes, we need a 
self-adjoint operator on a dense domain in $\mathcal{H}=L^2(S^1,V)$. 
The fractional Dirac operator on the circle is therefore defined by
\[D^q \psi(x)  = \sum_{k \in \mathbb{Z}} \sgn(k)|k|^q \psi_k e^{ikx} \ ,\]
where the complex phase has been replaced by the sign function. This
has the consequence that $D^q \circ D^r \neq D^{q+r}$, but we have instead 
$D^q\circ D^r=|D^{q+r}|$. For odd integers $q>1$, the fractional Dirac operator 
is simply the $q$-th power of the rotation operator $-i\frac d{dx}$ on the circle. 
The domain of $D^q$ is the Sobolev space $H^q(S^1,V)$. 
It is by construction an unbounded, self-adjoint operator with discrete 
spectrum $\{\sgn(k)|k|^q\}_{k \in \mathbb{Z}}$ and a complete set of eigenstates 
in $\mathcal{H}$. However, the Leibniz rule is no longer satisfied
\[D^q(\psi \phi) = \sum_{k,m\in \mathbb{Z}}\sgn(k)|k|^q \psi_m \phi_{k-m}e^{ikx} \neq \]
\[\neq (D^q\psi)\phi + \psi (D^q\phi) = \sum_{k,m\in \mathbb{Z}}
\Big(\sgn(m)|m|^q+\sgn(k-m)|k-m|^q\Big) \psi_m \phi_{k-m}e^{ikx} \ ,\]
unless $q=1$. 

We introduce interactions by imposing local gauge invariance. The covariant 
derivative $D^q_A = D^q+A$ should transform equivariantly under gauge transformations
\[g^{-1}D^q_A g = D^q + g^{-1}[D^q,g] + g^{-1}Ag = D^q_{A^g} \ .\]
This motivates the following definition of fractional Yang-Mills connections on the circle; 
\[A = \alpha [D^q, \beta], \ \ \ \alpha , \beta \in H^q(S^1,\mathfrak{g}) \ .\]
The fractional loop group $L_qG$ acts on $A$ by
\[(g, A)\mapsto A^g = g^{-1}Ag + g^{-1}[D^q,g]\]
and the infinitesimal gauge action is given by
\[(X,A) \mapsto \mathcal{L}_X A = [A,X] + [D^q,X] \ .\]
For values $\frac 12<q\leq1$, there is a geometric interpretation of this data in the 
non-commutative geometry sense. What we have is precisely a spectral triple, namely a Dirac operator
$D^q$, a Hilbert space $\mathcal{H}$ and an associative $*$-algebra $L_q\Bbb C$, \cite{Connes}. 
Here $L_q\Bbb C = H^q(S^1,\Bbb C)$ is an algebra for $\frac 12 < q$ by the Sobolev multiplication theorem.

\begin{proposition} $[D^q, X]$ is a bounded operator for all $X \in L_q\Bbb C$ 
and $0<q\leq1$,
\[\left\|[D^q,X]\right\| = \sup_{\substack{\psi\in \mathcal H \\ \left\|\psi \right\|=1 }} \left\|[D^q,X]\psi\right\|<\infty \ . \]
\end{proposition}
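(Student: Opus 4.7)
The plan is to work entirely in the Fourier basis $\{e^{ikx}\}_{k\in\mathbb{Z}}$ of $\mathcal{H} = L^2(S^1,V)$ and estimate the matrix elements of the commutator directly. Writing $\phi(k) = \sgn(k)|k|^q$ for the symbol of $D^q$, a short computation gives
$$([D^q,X]\psi)_k \;=\; \sum_{n\in\mathbb{Z}}\bigl(\phi(k)-\phi(n)\bigr)\,X_{k-n}\,\psi_n,$$
so $[D^q,X]$ is the operator on $\ell^2(\mathbb{Z})$ with kernel $K(k,n)=(\phi(k)-\phi(n))X_{k-n}$, and boundedness on $\mathcal{H}$ is equivalent to boundedness of this integral operator on $\ell^2$.

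The first analytic input is a H\"older-type bound $|\phi(k)-\phi(n)| \leq C_q\,|k-n|^q$ valid for all $k,n\in\mathbb{Z}$ and $0<q\leq 1$. I would prove this by casework: when $k,n$ have the same sign (or one is zero) it reduces to the standard subadditivity $|t^q-s^q|\leq |t-s|^q$ for $s,t\geq 0$; when they have opposite signs, $|\phi(k)-\phi(n)| = |k|^q+|n|^q$ while $|k-n| = |k|+|n|$, and since $a,b\leq a+b$ we get $a^q+b^q\leq 2(a+b)^q$, so the bound holds with $C_q=2$. Inserting this into the kernel $K$ and applying Schur's test, both the row and column sums of $|K(k,n)|$ reduce, after the substitution $m=k-n$, to $C_q\sum_{m}|m|^q|X_m|$, yielding
$$\|[D^q,X]\|_{\mathcal{H}\to\mathcal{H}} \;\leq\; C_q \sum_{m\in\mathbb{Z}} |m|^q\,|X_m|.$$

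The main obstacle is the final step: verifying $\sum_{m}|m|^q|X_m|<\infty$ for every $X\in L_q\mathbb{C}$. The Sobolev weight $(1+m^2)^q$ in the definition of $\|X\|_{2,q}$ is precisely critical for this sum, so a naive Cauchy--Schwarz with weight $(1+m^2)^{-q}$ does not close. I would instead exploit the full $L_q\mathbb{C}$ norm $\|X\|_\infty+\|X\|_{2,q}$ by a dyadic decomposition of the frequency variable: on shells $|m|\sim 2^j$ one interpolates between the uniform bound $|X_m|\leq \|X\|_\infty$ (dominant at low frequency) and the Sobolev square-summability of $(1+m^2)^{q/2}X_m$ (dominant at high frequency), and sums the geometric series in $j$. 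Once this convergence is secured the estimate combines with the kernel bound to give a finite operator norm depending only on $\|X\|_{L_q\mathbb{C}}$, which is exactly the boundedness statement needed for $(L_q\mathbb{C},\mathcal{H},D^q)$ to be a spectral triple.
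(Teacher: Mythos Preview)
There is a genuine gap at the final step. The convergence of $\sum_{m} |m|^q|X_m|$ does \emph{not} follow from $X\in L_q\mathbb{C}$, and the proposed dyadic interpolation cannot close. On a shell $|m|\sim 2^j$ the uniform bound $|X_m|\leq \|X\|_\infty$ (which carries no decay whatsoever) gives a contribution of order $2^{j(1+q)}$, while Cauchy--Schwarz together with the Sobolev estimate $\sum_{|m|\sim 2^j}|X_m|^2 \lesssim 2^{-2jq}\|X\|_{2,q}^2$ gives a contribution of order $2^{j/2}$; both grow with $j$, so no interpolation between them produces a summable geometric series. Concretely, for any $\tfrac12< q\leq 1$ the coefficients $X_m = |m|^{-q-1/2}(\log|m|)^{-1}$ for $|m|\geq 2$ (and zero otherwise) lie in $\ell^1$, so $X\in C^0\subset L^\infty$, and satisfy $\sum_m(1+m^2)^q|X_m|^2<\infty$, hence $X\in L_q\mathbb{C}$; yet $\sum_m |m|^q|X_m|=\sum_{|m|\geq 2} |m|^{-1/2}(\log|m|)^{-1}=\infty$. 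The Schur test is therefore too coarse here: it bounds the operator norm by an $\ell^1$-type quantity that is strictly stronger than the hypothesis.

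The paper avoids this by never passing through the $\ell^1$ sum. It uses the same pointwise estimate $|\phi(n+m)-\phi(n)|\leq C|m|^q$ that you establish (phrased as boundedness of $f(x)=\sgn(1+1/x)|1+x|^q-|x|^q$), but then reduces directly to the $\ell^2$ quantity $\sum_m |m|^{2q}|X_m|^2$, which \emph{is} essentially the Sobolev seminorm $\|X\|_{2,q}^2$ and hence finite by assumption. The mechanism is an identity expressing $\|[D^q,X]\psi\|^2$ as the double sum $\sum_{m,n}|X_m|^2|\psi_n|^2(\phi(n+m)-\phi(n))^2$, so that the inner sum over $m$ is bounded uniformly in $n$ by $C\|X\|_{2,q}^2$ and the outer sum over $n$ by $\|\psi\|^2$. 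Your route through Schur trades this $\ell^2$ structure for an $\ell^1$ structure, and that is exactly where the unavailable extra summability is lost.
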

\begin{proof} By expanding in Fourier series, 
\[[D^q,X]\psi = \sum_{k,m\in \mathbb{Z}}X_m \psi_{k-m}\Big(\sgn(k)|k|^q-\sgn(k-m)|k-m|^q\Big)e^{i(k+m)x}\] 
it follows that
\begin{eqnarray*} 
\left\|[D^q,X]\psi\right\|^2 &=& \sum_{m,k\in \mathbb{Z}} |X_m \psi_{k-m}|^2\Big(\sgn(k)|k|^q-\sgn(k-m)|k-m|^q\Big)^2 \\
&=& \sum_{m,n\in \mathbb{Z}} |X_m\psi_n|^2\Big(\sgn(n+m)|n+m|^q-\sgn(n)|n|^q\Big)^2 \ .
\end{eqnarray*}
Since the sequence $|\psi_n|^2$ belongs to $l^1$, the sum converges if
\[\sum_{m\in \mathbb{Z}} |X_m|^2\Big(\sgn(n+m)|n+m|^q-\sgn(n)|n|^q\Big)^2 < C\]
is uniformly bounded by some constant $C$. To establish this, we rewrite
\[\sum_{m\in \mathbb{Z}} |X_m|^2\Big(\sgn(n+m)|n+m|^q-\sgn(n)|n|^q\Big)^2 =\]\[= \sum_{m\in \mathbb{Z}} |X_m|^2 m^{2q}
\left(\sgn\left(1+\frac{m}{n}\right)\Big|1+\frac{n}{m}\Big|^q-\Big|\frac{n}{m}\Big|^q\right)^2 \ .\]
For $0<q\leq 1$, this sum is bounded for all $n\in\Z$ since
\[f(x) = \sgn\left(1+\frac{1}{x}\right)|1+x|^q-|x|^q\]
is a bounded function on the real line. 
\end{proof}
We have an immediate corollary:
\begin{corollary} For $\frac 12 \leq q\leq 1$ the space $L_q\mathbb{C}$
is the algebra of essentially bounded measurable loops $X$ such that
$[D^q,X]$ is a bounded operator. 
\end{corollary}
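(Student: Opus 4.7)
The forward direction, that every $X \in L_q\mathbb{C}$ gives a bounded commutator $[D^q,X]$, is exactly the content of the preceding Proposition, so I would focus on the converse. Suppose $X \in L^\infty(S^1,\mathbb{C})$ is measurable and $[D^q, X]$ extends to a bounded operator on $\mathcal{H} = L^2(S^1,V)$. The task is to deduce that $X \in H^q(S^1,\mathbb{C})$, i.e.\ $\|X\|_{2,q}^2 = \sum_k (1+k^2)^q |X_k|^2 < \infty$; combined with the $L^\infty$ hypothesis this will place $X$ in $L_q\mathbb{C}$.

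The key trick is to test the commutator on the simplest element of $\mathcal{H}$ — a constant function. Fix a nonzero $v \in V$ and set $\psi(x) \equiv v$, so $\|\psi\|_{L^2}^2 = 2\pi|v|^2$. Since the $k=0$ eigenvalue of $D^q$ vanishes (because $0^q = 0$ for $q>0$), $D^q\psi = 0$, and the formula used in the Proposition specializes to
\[
[D^q, X]\psi(x) \;=\; D^q(X\psi)(x) \;=\; \sum_{k \in \mathbb{Z}} \sgn(k)|k|^q\, X_k v\, e^{ikx}.
\]
Applying Parseval together with the operator bound $\|[D^q, X]\psi\|^2 \leq \|[D^q,X]\|^2\|\psi\|^2$ gives
\[
2\pi|v|^2 \sum_{k\in\mathbb{Z}} |k|^{2q}|X_k|^2 \;\leq\; \|[D^q,X]\|^2 \cdot 2\pi|v|^2,
\]
so that $\sum_{k\neq 0}|k|^{2q}|X_k|^2 \leq \|[D^q,X]\|^2 < \infty$.

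To finish, I would observe that $X \in L^\infty \subset L^2$ yields $|X_0| \leq \|X\|_\infty$, and that $(1+k^2)^q \leq 2^q |k|^{2q}$ for $k \neq 0$; combining these with the previous estimate shows $\|X\|_{2,q}^2 < \infty$, hence $X \in H^q \cap L^\infty = L_q\mathbb{C}$. I do not anticipate a genuine obstacle, as the converse reduces to a one-line Fourier computation on constant test vectors — essentially, $\|[D^q,X]\cdot 1\|_{L^2}$ is, up to a constant, the $H^q$-seminorm of $X$. The restriction $q \geq 1/2$ does not enter the argument itself; it simply matches the range on which the Sobolev definition of $L_q\mathbb{C}$ is in force in this section (Section 3 adopts a different norm to handle $q < 1/2$).
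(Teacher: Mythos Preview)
Your proof is correct and follows exactly the paper's approach: the paper dispatches the converse in one line by testing the commutator on the constant loop $\psi(x)=1$ and observing that $\|[D^q,X]\psi\|$ recovers (an equivalent of) $\|X\|_{2,q}$. You have simply filled in the details the paper omits --- controlling the zero mode via $L^\infty$ and comparing $(1+k^2)^q$ with $|k|^{2q}$ --- and your closing remark about the role of the restriction $q\geq 1/2$ is accurate.
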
 
The inverse statement follows from the observation that taking
$\psi(x)=1,$ the constant loop in the proof above, the norm $||[D^q,X]\psi||$ is equal to $||X||_{2,q}$. 
 
Similarly one verifies that $[|D^q|,X]$ is bounded for all $X \in L_q\mathbb{C}$. 
Recall that a spectral triple is $p^+$-summable if $|D^q|^{-p}$ belongs to the Dixmier 
ideal $\mathcal{L}^{1+}$. This means that for some real number $p\geq 1$, 
\[\lim_{N\to \infty} \frac{1}{\log(N)}\sum_{k=1}^N \lambda_k < \infty\] 
where $\lambda_k \geq \lambda_{k+1} \geq \lambda_{k+2}\ldots$ are eigenvalues of 
$|D^q|^{-p}$ listed in descending order. In our case
\[\lambda_k = \frac{1}{k^{qp}}\]
for $k=1,2,\ldots$. This gives 
\[\lim_{N \to \infty} \frac{1}{\log(N)}\sum_{k=1}^N \frac{1}{k^{qp}}\] 
which is finite if and only if $qp\geq 1$. Moreover, since 
\[\left\|{\rm ad}^n_{|D|^q}(X)\psi\right\|^2 = \sum_{m,k\in \mathbb{Z}} |X_m\psi_{k-m}|^2\Big(|k|^q-|k-m|^q\Big)^{2n}\]
diverges for $n>1$, we conclude that the spectral triple is not tame. 

It is also interesting to note that even though the algebra $L_q\mathbb{C}$ 
is commutative, the spectral dimension $p$ of the circle is strictly larger than one when $q<1$.

Fractional differentiability has been studied systematically in the more 
general context of $\theta$-summable spectral triples in \cite{Jaffe99}. However,
their definition of fractional differentiability, although similar, differs from ours which 
is geared to the special case of loop groups and $L_p$-summable spectral 
triples.

\section{Embedding of $L_qG$ in $GL_p$}

When dealing with representations of the loop group $LG$, one is lead to consider central 
extensions by the circle 
\[1\to S^1\to\widehat{LG}\to LG\to 1 \ .\]  
In Fourier basis, the generators $S^a_n$ of the Lie algebra $\widehat{L\mathfrak{g}}$ satisfy 
\[[S^a_n, S^b_m]=\lambda^{abc}S^c_{n+m}+kn\delta^{ab}\delta_{n,-m}\]
where $k$ is the central element (represented as multiplication by a
scalar in an irreducible  representation). Here the upper index refers
to a normalized basis (with respect to an invariant nondegenerate bilinear form)
of the Lie algebra $\bold{g}$ of the group $G.$ The
$\lambda^{abc}$'s are the structure constants in this basis. We have
adopted the Einstein summation convention meaning that an index appearing twice in a term is summed
over all its possible values. 
When trying to extend the central extension to the fractional setting, one runs immediately into a problem. 
For infinite linear combinations of the Fourier modes $S^a_n$  the central term blows up. 
A precise condition for the divergence can be formulated by regarding $L_qG$ as a group of operators 
in a Hilbert space. Let $\rho:G\to GL(V)$ denote a representation of $G$ as in the previous section. 
Elements in the fractional loop group act as multiplication operators in $\mathcal{H}=L^2(S^1,V)$
by pointwise multiplication,
\[(M_g \psi)(x) = \rho(g(x))\psi(x)\]
for all $g\in L_qG, \ \psi \in \mathcal{H}$. In fact, $M:L_qG \to GL(\mathcal{H}), \ g\mapsto M_g$ 
defines a continuous embedding into 
the general linear group. This statement is somewhat crude however. Below we show that $L_qG$ is 
actually contained in a subgroup $GL_p$ of $GL$. Recall that the sign operator 
$\epsilon = \frac{D^q}{|D^q|}$ defines an orthogonal decomposition $\mathcal{H}=\mathcal{H}_+\oplus \mathcal{H}_-$ 
into positive and negative Fourier modes. We use the convention that the zero mode of $D^q$ is on the positive 
side of the spectrum. Introduce the Schatten class
\[L_{2p}=\{A \in \mathcal{B}(\mathcal{\mathcal{H}}) \ |\ \left\| A \right\|_{2p}= 
\left[{\rm Tr}(A^\dagger A)^{p}\right]^{\frac{1}{2p}} < \infty\}\]
which is a two-sided ideal in the algebra of bounded operators $\mathcal{B}(\mathcal{H})$. In particular,
the first Schatten class $L_1$ is the space of trace class operators and $L_2$ is the space of 
Hilbert-Schmidt operators. We will use 
an equivalent norm which is better suited for computations 
\[\left\| A \right\|_{2p}= \left[\sum_{k\in \mathbb{Z}}\left\| A \phi_k \right\|^{2p}\right]^{\frac{1}{2p}}\]
where $\{\phi_k\}_{k\in \mathbb{Z}}$ is an orthonormal basis in $\mathcal{H}$.
The subgroup $GL_p\subset GL(\mathcal{H})$ is defined by
\[GL_p = \{A \in GL(\mathcal{H}) \ | \ [\epsilon, A] \in L_{2p}\} \ .\]
Writing elements in $GL(\mathcal{H})$ in block form with respect to the Hilbert space polarization
\[A = \begin{pmatrix} A_{++} & A_{+-} \\ A_{-+} & A_{--}\end{pmatrix} \ ,\]
the condition 
\[[\epsilon, A] = 2\begin{pmatrix} 0 & A_{+-} \\ -A_{-+} & 0\end{pmatrix}\in L_{2p}\]
simply means that the off-diagonal blocks are not ``too large". Given the topology defined by the norm
\[\left\| |A| \right\|_p= \left\| A_{++} \right\| + \left\| A_{+-} \right\|_{2p} +\left\| A_{-+} \right\|_{2p} +\left\| A_{--} \right\| \ ,\]
where 
\[\left\| a \right\| = \sup_{\left\|\psi\right\|=1} \left\| a\psi\right\|\] 
denotes the operator norm, $GL_p$ is a Banach Lie group with the Lie algebra
\[\mathfrak{gl}_p = \{X\in \mathcal{B}(\mathcal{H}) \ | \ [\epsilon, X] \in L_{2p}\} \ .\]
\begin{proposition} If $p\geq \frac{1}{2q}$, then $L_qG$ is contained in  $GL_p$. \end{proposition}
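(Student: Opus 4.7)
The plan is to pass to the Fourier basis of $\mathcal H = L^2(S^1,V)$, read off $[\epsilon, M_g]$ as a Hankel-type operator, and bound its Schatten $L_{2p}$ norm in terms of $\|g\|_{2,q}$, using the equivalent norm $\|A\|_{2p}^{2p} = \sum_k \|A\phi_k\|^{2p}$ introduced above.

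In the orthonormal basis $\phi_{k,\alpha}(x) = v_\alpha e^{ikx}$, with $\{v_\alpha\}$ an orthonormal basis of $V$, the multiplication operator $M_g$ has matrix block $\rho(g_{n-m})$ between Fourier modes $n$ and $m$, and $\epsilon\phi_k = \sgn(k)\phi_k$. Consequently the block of $[\epsilon, M_g]$ equals $(\sgn(n)-\sgn(m))\rho(g_{n-m})$, which vanishes whenever $n$ and $m$ have the same nonzero sign. Thus $[\epsilon, M_g]$ is concentrated in the off-diagonal corners $P_\pm M_g P_\mp$ of the polarisation, exhibiting the standard Hankel pattern.

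Next, a short direct computation shows that for $k>0$ the surviving row indices are $n \leq 0$ and one obtains
\[
\|[\epsilon, M_g]\phi_{k,\alpha}\|^2 \leq C \sum_{j\geq k} |g_{-j}|^2,
\]
with $C$ depending only on $\rho$ and $\dim V$; the case $k<0$ is symmetric, with positive-frequency Fourier tails. The Sobolev hypothesis immediately yields the elementary bound $\sum_{|j|\geq k} |g_j|^2 \leq k^{-2q}\|g\|_{2,q}^2$, hence $\|[\epsilon,M_g]\phi_k\|^{2p} \leq C'|k|^{-2qp}\|g\|_{2,q}^{2p}$ for $k \neq 0$; the $k=0$ contribution is a bounded multiple of $\|g\|_{L^2}^2$. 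Summing,
\[
\|[\epsilon,M_g]\|_{2p}^{2p} \;\leq\; C''\,\|g\|_{2,q}^{2p}\sum_{k\neq 0} |k|^{-2qp} \;+\; \text{const.},
\]
which is finite whenever $2qp > 1$.

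The main obstacle is the critical borderline $p=1/(2q)$, at which the naive tail estimate above produces a divergent harmonic series. To cover the endpoint I would either (i) refine the tail estimate by an Abel summation: letting $T_k = \sum_{|j|\geq k}|g_j|^2$, summation by parts of $\sum_j |j|^{2q}(|g_j|^2+|g_{-j}|^2)$ gives $\sum_k k^{2q-1}T_k \leq C\|g\|_{2,q}^2$, and monotonicity of $T_k$ then forces $\sum_k T_k^{1/(2q)} < \infty$; or (ii) complex-interpolate the Schatten ideals between the two sharp endpoints $(q,p) = (\tfrac12, 1)$ (Hilbert--Schmidt, where the matrix formula already gives equality) and $(q,p)=(1,\tfrac12)$ (trace class, via a finer Hankel analysis when $g \in H^1$). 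In either route the technical heart of the proposition is a Peller-type estimate for Hankel operators specialised to the Sobolev scale $H^q(S^1)$.
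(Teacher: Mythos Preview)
Your setup (Fourier basis, the column--norm expression $\|A\|_{2p}^{2p}=\sum_k\|A\phi_k\|^{2p}$ stated in the paper, and the Hankel structure of $[\epsilon,M_g]$) is exactly the paper's. The divergence is only in how the resulting sum is estimated.

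You compute each column norm as the Sobolev tail $\|[\epsilon,M_g]\phi_k\|^2 \le C\,T_k$ with $T_k=\sum_{|j|\ge |k|}|g_j|^2$, and then invoke the crude bound $T_k\le |k|^{-2q}\|g\|_{2,q}^2$. Summing $T_k^p$ misses the endpoint $p=1/(2q)$ by exactly one logarithm, which is what forces your Abel--summation or interpolation patch. The Abel route (i) can indeed be pushed through for $p\ge 1$ (dyadic blocks plus $\ell^1\hookrightarrow\ell^p$ once the sequence is rescaled), but the interpolation route (ii) would still require the trace--class endpoint $(q,p)=(1,\tfrac12)$ to be established independently, so it is not really a shortcut.

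The paper avoids all of this by reorganising the double sum at the outset: instead of bounding the tail for each fixed $k$, it swaps the order of summation so that, for each Fourier coefficient $g_n$, one simply counts the $|n|$ values of $k$ for which $\sgn(k)\ne\sgn(k+n)$, arriving at the closed expression $2^{2p}\sum_n |n|\,|g_n|^{2p}$. Writing $|n|\,|g_n|^{2p}=\bigl(|n|^{1/(2p)}|g_n|\bigr)^{2p}$ and comparing this termwise with $\bigl(|n|^{1/(2p)}|g_n|\bigr)^{2}$ (only finitely many terms have the base $\ge 1$, since $|n|^{q}|g_n|\to 0$) bounds the sum directly by $\|g\|_{2,1/(2p)}^2$ plus a finite remainder. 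Thus the critical case $p=1/(2q)$ falls out in one line, with no Peller theory or interpolation needed. Your argument is not wrong, but the tail estimate you use throws away precisely the information that makes the endpoint work; reorganising the sum over $n$ first, as the paper does, removes the difficulty at its source.
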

\begin{proof} In order to avoid cumbersome notation, we write $g(x)$ 
instead of $\rho(g(x))$. Expanding in Fourier series 
$g(x)=\sum_{k\in \mathbb{Z}}g_k e^{ikx}$, we have
\[M_{g}e^{ikx} = \sum_{m\in \mathbb{Z}} g_{m}e^{i(m+k)x} = \sum_{m\in \mathbb{Z}} g_{m-k}e^{imx}\]
so that $(M_g)_{mk} = g_{m-k}$. Moreover 
\[\epsilon e^{ikx} = \begin{cases} \frac{k}{|k|} e^{ikx} &  k \neq 0 \\ e^{ikx} & k=0\end{cases}\]
We check that $\left\|[\epsilon, M_g]\right\|_{2p}$ is finite;
\begin{eqnarray*} 
\left\|[\epsilon, M_g]\right\|_{2p}^{2p} &=& \sum_{k\in \mathbb{Z}} \left\|[\epsilon,M_g]e^{ikx}\right\|^{2p} \\ 
&=& \sum_{m, k\in \mathbb{Z}}\left|\frac{m}{|m|}-\frac{k}{|k|}\right|^{2p}|g_{k-m} e^{imx}|^{2p} \\
&=& \sum_{n,k\in \mathbb{Z}}\left|\frac{(n+k)}{|n+k|}-\frac{k}{|k|}\right|^{2p}|g_n|^{2p} \\
&=& 2^{2p}\sum_{n\in \mathbb{Z}}|n||g_n|^{2p} \\
&=& 2^{2p}\sum_{n\in \mathbb{Z}}||n|^{\frac{1}{2p}}g_n|^{2p} \\
&\leq& 2^{2p}\sum_{n\in \mathbb{Z}}(n^2)^{\frac{1}{2p}}|g_n|^{2} + C_g 
\end{eqnarray*}
where $C_g$ is the finite part of the sum containing all terms where $|n|^{\frac{1}{2p}}|g_n|\geq 1$. 
Thus the sum converges if the Sobolev norm
\[\left\|g\right\|_{2, \frac{1}{2p}}^2 = \sum_{n \in \mathbb{Z}}(1+n^2)^{\frac{1}{2p}}|g_n|^{2}\]
is finite, which is the case when 
\[p\geq \frac{1}{2q} \ .\]
\end{proof} 
The same arguments apply without modification to the Lie algebra $L_q\mathfrak{g}\hookrightarrow \mathfrak{gl}_p$.
We see here precisely how the degree of differentiability $q$ is related to the Schatten index $p$. Furthermore,
this allows us to extend the definition of $L_qG$ to all real values $0<q<\infty$:

\begin{definition} 
	The fractional loop group $L_qG$ is defined to be the group of continuous loops contained 
	in $GL_p$, where $p= \text{max}\left\{\frac 12, \frac{1}{2q}\right\}.$
	We shall use the induced Banach structure on the Lie algebra $L_q\mathfrak g$ coming from 
	the embedding, defined by the norm
	\[\left\| X \right\|_\infty + \left\| |X| \right\|_p  \ .\]
	This endows $L_qG$ with a Banach Lie group structure.
\end{definition}

\begin{remark} \rm By abuse of notation, we use the same label $L_qG$ as in Definition 2.1 for this slightly larger fractional loop group.
	Actually, it follows by Proposition 3.1 that the group in Definition 2.1 is continuously embedded in $L_qG$, for $q>\frac 12$.
	For the remainder of this paper, we shall adopt Definition 3.2 for the fractional loop group.
	Also, note that for $q\geq 1$, $L_qG$ consists of continuous loops contained in $GL_{\frac 12}$, that is operators whose 
	off-diagonal blocks are trace class. This is the critical value for the Schatten index $p$, since for lower values than $\frac 12$ the spaces 
	$L_{2p}$ are no longer ideals.
\end{remark}

The cocycle defining the central extension $\widehat{L\mathfrak{g}}$ can be written
\[c_0(X,Y)=\frac{1}{8}{\rm Tr}\Big(\epsilon[[\epsilon,X],[\epsilon,Y]]\Big)={\rm Tr}\Big(X_{+-}Y_{-+}-Y_{+-}X_{-+}\Big)\]
for $X, Y \in L\mathfrak{g}$. This is finite so long as the off-diagonal blocks are Hilbert-Schmidt, 
i.e. $[\epsilon, X]$ and  $[\epsilon, Y]$ belong to $L_2$. However for $p>1$, corresponding to 
differentiability $q$ less than $\frac{1}{2}$, the operators $X_{+-}Y_{-+}$ and $Y_{+-}X_{-+}$ are no 
longer trace-class, because $a,b\in L_{2p}$ implies that $ab\in L_p$ by H\"older inequality. This 
is the reason behind the divergence. In order to make sense of the cocycle for higher $p$, regularization is 
required. 

We introduce the Grassmannian $Gr_p$ \cite{Mic89}, which is a smooth Banach manifold parametrized by idempotent hermitian 
operators $F$ such that $F-\epsilon \in L_{2p}$. Points on $Gr_p$ may also be thought of as closed subspaces 
$W\subset \mathcal{\mathcal{H}}$ such that the orthogonal projection ${\rm pr}_+:W\to \mathcal{H}_+$ is Fredholm 
and ${\rm pr}_-:W\to \mathcal{H}_-$ is in
$L_{2p}$. Let $\eta(X;F)$ denote a 1-cochain parametrized by points on $Gr_p$. For a suitable choice of $\eta$, 
adding the coboundary to the original cocycle
\[c_p(X,Y;F) = c_0(X,Y) + (\delta\eta)(X,Y;F) \ ,\]
we obtain a well-defined cocycle on $\mathfrak{gl}_p$ for some fixed
$p$. The Lie algebra cohomology coboundary operator $\delta$ is
defined by Palais' formula in Section 4. 
Although each term on the right diverges separately, 
the sum will be finite. In the case $p=2$, we could take
\[\eta(X;F)=-\frac{1}{16}{\rm Tr}\Big([X,\epsilon][F,\epsilon]\Big)\]
which gives
\[c_2(X,Y;F) = c_0(X,Y) + (\delta\eta)(X,Y;F)=\frac{1}{8}{\rm Tr}\Big([[\epsilon,X],[\epsilon,Y]](\epsilon-F)\Big) \ .\]
An important consequence of this so called ``infinite charge
renormalization" is that the central extension 
$\widehat{L\mathfrak{g}}$ will be replaced by an abelian extension by the infinite-dimensional ideal ${\rm Map}(Gr_p, \mathbb{C})$,
\[0\to {\rm Map}(Gr_p, \mathbb{C})\to \widehat{L_q\mathfrak{g}}\to L_q\mathfrak{g} \to 0 \ .\]

\section{Lie algebra cocycles}

In this section we will construct Lie algebra cocycles for $\mathfrak{gl}_p$ for all $p\geq\frac 12$, 
which by restriction yield cocycles on $L_q\mathfrak{g}$. In particular, we show that the cocycles respect the 
decomposition of $L_q\mathfrak{g} = L_q\mathfrak{g}_+\oplus L_q\mathfrak{g}_-$ 
into positive and negative Fourier modes on the circle. The computation is done in the non-commutative BRST 
bicomplex, where the classical de Rham complex is replaced by a graded differential algebra 
$(\Omega, d)$. Let $\epsilon = \frac{D^q}{|D^q|}$ denote the sign operator on the circle satisfying 
$\epsilon=\epsilon^*$ and $\epsilon^2=1$. We introduce a family of vector spaces
\begin{eqnarray*} 
\Omega^{0}&=&\{X\in \mathcal{B}(\mathcal{H}) \ | \ X-\epsilon X \epsilon \in L_{2p}\}\\
\Omega^{2k-1}&=&\{X\in \mathcal{B}(\mathcal{H}) \ | \ X+\epsilon X \epsilon \in L_{\frac{2p}{2k}}, 
\ X-\epsilon X \epsilon \in L_{\frac{2p}{2k-1}}\}\\
\Omega^{2k}&=& \{X\in \mathcal{B}(\mathcal{H}) \ | \ X+\epsilon X \epsilon \in L_{\frac{2p}{2k}}, 
\ X-\epsilon X \epsilon \in L_{\frac{2p}{2k+1}}\}
\end{eqnarray*}
and set $\Omega = \oplus_{k=0}^\infty \Omega^{k}$. The exterior differentiation is defined by
\begin{equation*}dX = \begin{cases}  [\epsilon, X], & {\rm if} \ X \in \Omega^{2k} \\ 
 \{\epsilon, X\}, & {\rm if} \  X \in \Omega^{2k-1} \end{cases}\end{equation*}
and satisfies $d^2=0$. Here $\{X,Y\}=XY+YX$ is the anticommutator. The space $\Omega^k$  consists of linear 
combinations of $k$-forms
$X_0dX_1 dX_2 \dots dX_k$ with $X_i \in \Omega^0$.  
 If $X\in \Omega^{k}$ and $Y\in\Omega^{l}$, then 
\[XY\in\Omega^{k+l}, \ \ dX \in \Omega^{k+1}, \ \ d(XY) = (dX)Y+(-1)^kXdY\]
which follows by the generalized H\"older inequality for Schatten ideals. This ensures that $(\Omega, d)$ is 
a $\Bbb N$-graded differential algebra. 
Integration of forms is substituted by a graded trace functional
\begin{equation*} {\rm Str}(X) =  {\rm Tr}_C(\Gamma X), \ {\rm for \,\,   }  X\in \Omega^k  \ {\rm and   \,\, }  k\geq p ,
\end{equation*}
where $\Gamma$ is a grading operator on $\mathcal H$ and ${\rm Tr}_C(X) = \frac12 {\rm Tr}(X+\epsilon X\epsilon)$ 
is the conditional trace, \cite{Connes}. In case of an even Fredholm module ($k$ even), $\Gamma$ anticommutes with 
$\epsilon$ and in case of an odd Fredholm module ($k$ odd), $\Gamma=1$. 

Next we define the Lie algebra chain complex $(C, \delta)$. Since $\Omega^{0}= \mathfrak{gl}_p$ and $Gr_p \subset\Omega^{1}$, 
we interpret $B \in \Omega^{1}$ as generalized connection 1-forms and
define the infinitesimal gauge action by 
\[\Omega^{0}\times \Omega^{1}\to \Omega^{1}, \ \ \ (X,B) \mapsto \mathcal{L}_X B = [B,X] + dX \ .\]
Indeed for any $F\in Gr_p$ we have $F=g^{-1}\epsilon g$ for some $g\in GL_p$, since $GL_p$ acts transitively 
on the Grassmannian. Thus $F-\epsilon = g^{-1}dg$ corresponds to flat connections. The abelian group 
$ Map(\Omega^1,\Omega)$ is naturally a $\Omega^0$-module under the action
\[\Omega^0\times Map(\Omega^1,\Omega)\to Map(\Omega^1,\Omega), \ \ \ (X,f)\mapsto 
\mathcal{L}_Xf(B)=\frac{d}{dt}f\Big(e^{-tX}Be^{tX}+tdX\Big)\Big|_{t=0} \ .\]
Define the space of $k$-chains $C^k$ as alternating multilinear maps
\[\omega:\underbrace{\Omega^0\times\ldots\times\Omega^0}_{k}\to Map(\Omega^1,\Omega)\]
and set $C=\oplus_{k=0}^\infty C^k$. The coboundary operator is given by Palais' formula 
\begin{eqnarray*} 
\delta \omega(X_1,\ldots,X_k;B)&=&\sum_{j=1}^k(-1)^{j+1}
\mathcal{L}_{X_j}\omega(X_1,\ldots,\hat{X}_j,\ldots,X_k;B) \\
&&+\sum_{i<j}(-1)^{i+j}\omega([X_i,X_j], X_1,\ldots,\hat{X}_i,\ldots,\hat{X}_j,\ldots,X_k;B)
\end{eqnarray*}
and satisfies $\delta^2=0$. Here $\hat{X}_j$ means that the variable $X_j$ is omitted. 

This provides us with a double complex. For our purposes, we need cocycles of degree 2 in the 
Lie algebra cohomology. There is a straightforward way to compute such cocycles parametrized 
by \textit{flat} connections $B=F-\epsilon$. Given a bicomplex with commuting differentials, there is 
an associated singly graded complex with differential $D = \delta + (-1)^k d$. The Lie 
algebra 2-cocycle is given by
\begin{eqnarray*} 
\tilde c_p(X,Y; B) &=& \frac{2^{2p}}{(2p+1)} \ {\rm Str}\big(g^{-1}D g\big)^{2p+3}_{[2p+1,2]}(X,Y) \\
&=& 2^{2p} \ {\rm Str}\left(\sum_{k=0}^{p}(g^{-1}{\rm d} g)^{2p+1-k}(g^{-1} \delta g)(g^{-1}
{\rm d} g)^k ( g^{-1} \delta g)\right)(X,Y)\\
&=& 2^{2p} \ {\rm Str}\left( \sum_{k=0}^{p} (-1)^k \big(B^{2p+1-k}XB^kY-B^{2p+1-k}YB^kX\big)\right)
\end{eqnarray*}
for all $p\geq0$. By $(\ldots)_{[j,k]}$ we mean the component of degree $(j,k)$ in $(d, \delta)$ 
cohomology. That this is a cocycle follows from
\[\delta {\rm Str}(g^{-1}D g)^{2p+3}= {\rm Str}D (g^{-1}D g)^{2p+3} = 
{\rm Str}(g^{-1}D g)^{2p+4}=0 \]
where we have used that ${\rm Str}(g^{-1}D g)^{even}=0$. Although $\tilde c_p(X,Y; B)$ does not 
vanish when $X,Y\in L_q\mathfrak{g}_{-}$, we have:
\begin{theorem} \label{cocycles} The cocycle $\tilde c_p$ is cohomologous to a cocycle $c_p$ which has the property 
	that $ c_p(X,Y;B)=0$ if both $X$ and $Y$ are in $L_q\mathfrak{g_-}$ (or both in $L_q\mathfrak{g_+}$). 
\end{theorem}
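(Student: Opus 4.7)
My plan is to construct an explicit 1-cochain $\eta_p\in C^1$ such that $c_p := \tilde c_p - \delta\eta_p$ vanishes on $L_q\mathfrak g_-\times L_q\mathfrak g_-$; since $c_p$ differs from $\tilde c_p$ only by a coboundary, it is automatically a 2-cocycle in the same cohomology class. The key structural fact I intend to exploit is that $L_q\mathfrak g_-$ is a Lie subalgebra (products of negative Fourier modes remain of negative Fourier support) and that each of its elements acts on $\mathcal H$ as a strictly lower block-triangular operator relative to the polarization defined by $\epsilon$: multiplication by $e^{ikx}$ with $k<0$ cannot raise the Fourier index, so $X_{+-}=0$ for every $X\in L_q\mathfrak g_-$.

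The natural candidate for $\eta_p$ is a Chern-Simons-style secondary transgression obtained from the primary identity $\delta\,\mathrm{Str}(g^{-1}Dg)^{2p+3}=0$ that was already used in the text to verify cocyclicity of $\tilde c_p$. Writing $B = B^{\mathrm{d}} + B^{\mathrm{o}}$ with $B^{\mathrm{d}}=\tfrac12(B+\epsilon B\epsilon)$ commuting with $\epsilon$ and $B^{\mathrm{o}}=\tfrac12(B-\epsilon B\epsilon)$ anticommuting with it, I take
\[
\eta_p(X;B) = 2^{2p}\,\mathrm{Str}\!\left(\sum_{k=0}^{2p+1}\beta_k\,B^{2p+1-k}\,X\,B^k\right)
\]
with rational coefficients $\beta_k$ pinned down by the requirement that $\delta\eta_p$ reproduces the monomials of $\tilde c_p$ modulo a manifestly block-diagonal remainder; the $\beta_k$ are read off from the $[2p+1,1]$-component of the primary transgression. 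Absolute convergence of every trace in $\eta_p$ follows from $[\epsilon,B]\in L_{2p}$ together with the definition of the spaces $\Omega^k$ and the generalized Hölder inequality for Schatten ideals invoked immediately after that definition.

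The vanishing $c_p(X,Y;B)=0$ on $L_q\mathfrak g_-\times L_q\mathfrak g_-$ is then checked blockwise. After subtracting $\delta\eta_p$, the surviving terms involve only block-diagonal powers of $B^{\mathrm{d}}$ sandwiched against $X$ and $Y$; these preserve the polarization, the graded trace splits as $\mathrm{Tr}_{\mathcal H_+}+\mathrm{Tr}_{\mathcal H_-}$, and on each half the remaining expression collapses, via cyclicity of $\mathrm{Str}$ and the graded Leibniz rule in $(\Omega,d)$, to the classical Kac-Moody bilinear $\mathrm{Tr}(X_{+-}Y_{-+}-Y_{+-}X_{-+})$, which is identically zero under the hypothesis $X_{+-}=Y_{+-}=0$. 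The analogous statement on $L_q\mathfrak g_+$ is obtained by the involution $\epsilon\mapsto-\epsilon$, which swaps the two polarizing subalgebras while leaving the Chern-Simons construction equivariant.

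The main obstacle is the combinatorial identification of the coefficients $\beta_k$ and the verification that $\delta\eta_p$ really matches $\tilde c_p$ up to a block-diagonal remainder; this is the same Cuntz-Quillen-style bookkeeping that underlies the derivation of $\tilde c_p$ itself, and once the coefficients are in hand the blockwise cancellation is purely algebraic. A potential secondary difficulty is ensuring that the remainder, which a priori only decomposes into a sum of $\mathrm{Tr}_{\mathcal H_\pm}$-traces after one uses $[\epsilon,B]\in L_{2p}$, is estimated uniformly in $B$ so that $c_p$ defines a valid element of the cochain space $C^2 = C^2(L_q\mathfrak g,\mathrm{Map}(\Omega^1,\Omega))$.
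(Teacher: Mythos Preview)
Your outline has the right overall shape --- subtract a coboundary, then exploit block structure --- but the mechanism you propose for the vanishing step is wrong, and without it the argument does not close.

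The paper takes the single-term cochain $\tilde\eta_p(X;B)=\mathrm{Str}\bigl(B^{2p+1}X\bigr)$, with no undetermined coefficients. A direct computation of $\delta\tilde\eta_p$ via Palais' formula, together with the identities $dB^{2m}=0$ and $dB^{2m+1}=-B^{2m+2}$ valid for \emph{flat} $B$, yields the explicit representative
\[
c_p(X,Y;B):=\tilde c_p - 2^{2p}\,\delta\tilde\eta_p
= -2^{2p}\sum_{m=0}^{p}\mathrm{Str}\bigl(B^{2m}\,dX\,B^{2p-2m}Y - B^{2m}\,dY\,B^{2p-2m}X\bigr).
\]
This expression still depends on $B$ and does \emph{not} collapse to the Kac--Moody bilinear $\mathrm{Tr}(X_{+-}Y_{-+}-Y_{+-}X_{-+})$. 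Your claim that the remainder reduces to $c_0$ conflates Theorem~4.1 with Theorem~4.2: the latter requires a further tower of cochains $\eta_0,\dots,\eta_{p-1}$ and holds only after restriction to the smooth subalgebra, whereas here one must establish vanishing on all of $L_q\mathfrak g_-$.

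The actual reason $c_p$ vanishes on $L_q\mathfrak g_-\times L_q\mathfrak g_-$ is purely block-structural and hinges on flatness. From $(\epsilon+B)^2=1$ one gets $[\epsilon,B^2]=0$, so every even power $B^{2m}$ is block-diagonal. For $X,Y\in L_q\mathfrak g_-$ the multiplication operators $X,Y$ are block lower-triangular and $dX=[\epsilon,X]$ has only a $(-+)$-block; hence each summand $B^{2m}\,dX\,B^{2p-2m}Y$ has no block-diagonal part and the conditional trace $\mathrm{Str}=\mathrm{Tr}_C$ annihilates it. Your proposal never invokes the flatness of $B$, and without it $B^{2m}$ need not commute with $\epsilon$; the decomposition $B=B^{\mathrm d}+B^{\mathrm o}$ you introduce does not by itself force the remainder into block-diagonal form, so the ``coefficient-tuning'' step you defer is where the proof would actually fail.
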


\begin{remark} \rm Note that we can replace the abelian ideal of smooth functions of the variable $B$ 
by the space of functions on $L_qG$ by using the embedding of $L_qG$ to the Grassmannian
$Gr_p$ given by $g\mapsto B= g^{-1}[\epsilon, g].$ 
\end{remark}

\noindent The proof of Theorem \ref{cocycles} is by direct computation and is shifted to the Appendix.  
 
\begin{theorem} The cocycle $c_p$ in the Appendix, when restricted to the Lie
algebra of smooth loops, is cohomologous to the cocycle defining the standard central
extension of the loop algebra $L\mathfrak{g}$. \end{theorem}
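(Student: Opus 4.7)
The plan is to exhibit a 1-cochain $\mu_p(X;F)\in\mathrm{Map}(Gr_p,\mathbb{C})$, defined for $X\in L\mathfrak{g}$, such that
\[c_p(X,Y;F)-c_0(X,Y)=\delta\mu_p(X,Y;F)\qquad\text{on }L\mathfrak{g}\times L\mathfrak{g}.\]
The decisive simplification on the smooth loop algebra is that $[\epsilon,X]$ is a smoothing operator for $X\in L\mathfrak{g}$ (since the Fourier coefficients of $X$ decay faster than any polynomial), so it lies in every Schatten ideal $L_s$, $s\geq 1$. Consequently every trace appearing in the Appendix formula for $c_p$ and in the standard cocycle $c_0(X,Y)=\tfrac18\mathrm{Tr}\bigl(\epsilon[[\epsilon,X],[\epsilon,Y]]\bigr)$ is absolutely convergent, cyclicity of $\mathrm{Tr}$ may be used without restriction, and none of the regularisations built into $c_p$ on $L_q\mathfrak{g}$ is needed.

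I would model the construction of $\mu_p$ on the worked $p=2$ case in Section 3, where $\eta(X;F)=-\tfrac1{16}\mathrm{Tr}\bigl([X,\epsilon][F,\epsilon]\bigr)$ realises $c_2=c_0+\delta\eta$. For general $p$ the natural ansatz is
\[\mu_p(X;F)=\gamma_p\,\mathrm{Tr}\bigl([X,\epsilon]\,P_p(F-\epsilon,\epsilon)\bigr),\]
with $P_p$ a non-commutative polynomial of total degree $2p-1$ in $F-\epsilon$ and $\gamma_p$ a normalisation constant. The conceptual origin of $\mu_p$ is BRST descent: $c_p$ arises as the $(2,2p+1)$-component of the descent of the closed form $\mathrm{Str}\bigl((g^{-1}Dg)^{2p+3}\bigr)$ under $D=\delta+(-1)^k d$, while $c_0$ is the $(2,1)$-component of the descent of $\mathrm{Str}\bigl((g^{-1}Dg)^3\bigr)$. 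On smooth loops the higher Chern form differs from a scalar multiple of the cubic one by a $D$-exact term, and the lower descent component of its primitive is exactly $\mu_p$. The normalisation $\gamma_p$ is pinned down by testing both sides on a pair $X=S^a_n$, $Y=S^b_{-n}$ at $F=\epsilon$, where $c_p$ must reproduce the affine relation $c_0(S^a_n,S^b_{-n})\propto n\delta^{ab}$.

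The verification $\delta\mu_p=c_p-c_0$ then reduces to a combinatorial identity under $\mathrm{Tr}$, using the infinitesimal action $\mathcal{L}_X F=[F,X]+[\epsilon,X]$ on the grassmannian parameter. This bookkeeping is the principal obstacle: $c_p$ carries terms of degree up to $2p+1$ in $F-\epsilon$, and each application of $\delta$ produces many cross-terms whose cancellation is not manifest. I expect the cleanest route is induction on $p$, where the step from $p$ to $p+1$ collapses to a single application of the descent relation $D\,\mathrm{Str}\bigl((g^{-1}Dg)^{2p+3}\bigr)=0$, together with the vanishing of all secondary characteristic classes above the cubic one for the embedding $LG\hookrightarrow GL_1$ — itself a consequence of the smoothing property exploited in the first paragraph.
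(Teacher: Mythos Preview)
Your strategy---reduce to $c_0$ by induction on $p$ via an explicit step cochain---is exactly what the paper does. The paper's inductive cochain is
\[\eta_p(X;B)=2^{2p+1}\,\mathrm{Tr}\bigl(\epsilon\,B^{2p+1}\,dX\bigr),\qquad B=F-\epsilon,\]
and one verifies by a direct computation parallel to the Appendix that $c_{p+1}=c_p-\delta\eta_p$, whence $c_p\sim c_{p-1}\sim\cdots\sim c_0=\tfrac12\,\mathrm{Tr}(X\,dY)$. Your global cochain $\mu_p$ is then the telescoping sum $-\sum_{k=0}^{p-1}\eta_k$, a polynomial in $B$ with one factor of $\epsilon$ and odd monomials of degree $1,3,\dots,2p-1$, consistent with your ansatz.

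The gap is that you outline the plan but do not execute it: you call the coboundary verification ``the principal obstacle'' and stop there. That computation \emph{is} the proof. Your appeal to BRST descent and to the vanishing of secondary characteristic classes above the cubic one for $LG\hookrightarrow GL_1$ is a heuristic for why such a cochain ought to exist, not a substitute for writing it down and checking $\delta\mu_p=c_p-c_0$. The paper's shortcut is that once you guess the single-term $\eta_p$ above, the identity $c_{p+1}-c_p=-\delta\eta_p$ follows from the same trace manipulations (cyclicity, $dB^{2m}=0$, $dB^{2m+1}=-B^{2m+2}$, integration by parts) already used in the Appendix, so the full descent machinery need not be invoked.
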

\begin{proof} Define $\eta_p(X;B) = 2^{2p+1}\ \text{Tr}\big( \epsilon B^{2p+1} dX\big)$. Then
by direct computation, similar to the one in the Appendix and which will not
be repeated here,  
$$ c_{p+1}(X,Y;B)= c_p(X,Y;B)-(\delta \eta_p)(X,Y;B) \ .$$ 
Thus $ c_p$ is cohomologous to $ c_{p-1}$ and by induction to
the cocycle  $ c_0$. But 
$$ c_0(X,Y;B) = \frac12 \text{Tr}\big(XdY\big)$$
which is precisely the cocycle defining the central extension of the loop algebra. \end{proof}
We note that the 1-cochain $\eta(X;F)$ in Section 3 is simply the sum $ - \sum_{k=0}^{p-1} \eta_k(X;B)$, 
where $B=F-\epsilon$.

\begin{remark} \rm Since the Lie algebra cocycle vanishes on $L_q\mathfrak g_-$ one can define
generalized Verma modules as the quotient of the universal enveloping algebra $\mathcal U(
\widehat{L_q\mathfrak g})$ by the left ideal generated by $L_q\mathfrak g_-$ and by the elements
$h -\lambda(h)$ where the $h$'s are elements in a Cartan subalgebra $\mathfrak h$ of $\mathfrak g$ and 
$\lambda \in \mathfrak h^*$ is a weight. In the case of the central extension of the smooth loop
algebra, for dominant integral weights one can construct an invariant hermitean form in the
Verma module; there is a subquotient (including the highest weight vector) which carries
an irreducible unitarizable 
representation of the Lie algebra. However, in the case of $L_q\mathfrak g$ for $q < 1/2$ 
this is not possible: due to the large abelian ideal in the extension of $L_q\mathfrak g$
we cannot construct any invariant hermitean semidefinite form on the Verma module.
Whether this is possible at all is an open question, although we conjecture that the answer
is negative. 
\end{remark}

\section{Generalized supersymmetric WZW model}

Let us recall the construction of a family of supercharges $Q(A)$ for the supersymmetric 
Wess-Zumino-Witten model in the setting of representations of the
smooth loop algebra, \cite{Mic02}, \cite{Lan}, \cite{FHT03}.

Here we assume that $G$ is a connected, simply connected simple 
compact Lie group of dimension $N$. Let $\mathcal{H}_b$ denote the ``bosonic" Hilbert space carrying an 
irreducible unitary highest weight representation of the loop algebra $\widehat{L\mathfrak{g}}$ 
of level $k$. The level $k$ is a non-negative integer and we introduce $k' =  2\theta^2 k$, 
where $\theta$ is the length of the longest root of $G$. The generators of the loop algebra 
in Fourier basis $T^a_n$ satisfy
\[[T^a_n, T^b_m]= \lambda^{abc}T^c_{n+m}+\frac{k'}{4}n\delta^{ab}\delta_{n,-m}\] 
where $n\in\mathbb{Z}$ and $a=1,2,\ldots,N$. We fix an orthonormal basis $T^a$ in $\mathfrak{g}$,
with respect to the Killing form, so that the structure constants $\lambda^{abc}$ are completely 
antisymmetric and the Casimir invariant $C_2=\lambda^{abc}\lambda^{acb}$ equals $-N$. 
Moreover, we have 
\[(T^a_n)^*=-T^a_{-n} \ .\]
The fermionic Hilbert space $\mathcal{H}_f$ carries an irreducible representation of the canonical 
anticommutation relations (CAR)
\[\{\psi^a_n,\psi^b_m\}=2\delta^{ab}\delta_{n,-m}\]
where $(\psi^a_n)^*=\psi^a_{-n}$. The Fock vacuum is a subspace of $\mathcal{H}_f$ of dimension 
$2^{[\frac{N}{2}]}$. It carries an irreducible representation of the Clifford algebra spanned 
by the zero modes $\psi^a_0$ and lies in the kernel of all $\psi^a_n$ with $n<0$. The loop 
algebra $\widehat{L\mathfrak{g}}$ acts in $\mathcal{H}_f$ through the minimal representation of level 
$h^\vee$, the dual Coxeter number of $G$. 
The operators are explicitly realized as bilinears in the Clifford generators
\[K^a_n=-\frac{1}{4} \lambda^{abc}: \psi^b_{n-m}\psi^c_m : \]
and satisfy
\[[K^a_n, K^b_m]= \lambda^{abc}K^c_{n+m}+\frac{h'^{\vee}}{4}n\delta^{ab}\delta_{n,-m} \ ,\] 
where $h'^{\vee}= 2 \theta^2 h^{\vee}$. The normal ordering $: \ :$ indicates that operators with positive Fourier index are placed 
to the left of those with negative index. In case of fermions there is a change of sign, 
$:\psi^a_{-n}\psi^b_n:=-\psi^b_n\psi^a_{-n}$ if $n>0$. The full Hilbert space 
$\mathcal{H}=\mathcal{H}_b\otimes \mathcal{H}_f$ carries a tensor product representation of 
$\widehat{L\mathfrak{g}}$ of level $k+h^\vee$, with generators 
$S^a_n = T^a_n\otimes \mathbf{1}+\mathbf{1}\otimes K^a_n $. 

The supercharge operator is defined by
\[Q=i\psi^a_n\left(T^a_{-n}+\frac{1}{3}K^a_{-n}\right)\]
and squares to the free Hamilton operator $h = Q^2$ of the supersymmetric WZW model
\[h = -: T^a_nT^a_{-n}: +\frac{\bar{k}}{2}: n\psi^a_n\psi^a_{-n}: +\frac{N}{24} = 
h_b+2\bar{k}h_f+\frac{N}{24}\]
where $\bar{k}= \frac{k'+h'^{\vee}}{4}$. Interaction with external $\mathfrak{g}$-valued 1-forms 
$A$ on the circle is introduced by minimal coupling
\[Q(A) = Q+i\bar{k}\psi^a_{n}A^a_{-n}\]
where $A^a_n$ are the Fourier coefficients of $A$ in the basis $T^a_n$, satisfying 
$(A^a_n)^*=-A^a_{-n}$. This provides us with a family of self-adjoint Fredholm operators 
$Q(A)$ that is equivariant with respect to the action of $\widehat{LG}$,
\[S(g)^{-1}Q(A)S(g) =Q(A^g)\]
with $A^g= g^{-1}Ag+g^{-1}dg$. Infinitesimally this translates to
\[[S^a_n,Q(A)]=i\bar{k}(n\psi^a_n+\lambda^{abc}\psi^c_{n+m}A^b_{-m}) = -\mathcal{L}^a_n Q(A) \ .\]
where $\mathcal{L}^a_n$ denotes the Lie derivative (infinitesimal gauge transformation) in the 
direction $X=S^a_n.$ 
The interacting Hamiltonian $h(A)=Q(A)^2$ is given by
\[h(A)=h-\bar{k}\left(2S^a_nA^a_{-n}+\bar{k}A^a_{n}A^a_{-n}\right)=h +h_{int} \ .\]

Next we consider extending this construction to the fractional
setting.  As previously mentioned,
this necessarily entails certain regularization. Let us first denote
by  $S_0$ the representation of the smooth  loop algebra with
commutation relations  
\[[S_0(X),S_0(Y)]=S_0([X,Y])+c_0(X,Y) \ .\]
We proceed by adding a 1-cochain
\begin{eqnarray*} 
S(X) &=& S_0(X)+\eta(X;B) \\
Q &=&  Q_0 + \eta(\psi;B)
\end{eqnarray*}
where in component notation $\eta(\psi;B)= \psi^a_n
\eta(T^a_{-n};B) = \psi^a_n\eta^a_{-n}$ and we denote the original
supercharge by $Q_0$ from now on. 
Here $B=g^{-1}[\epsilon, g]$ parametrizes points on the Grassmannian, as
in Section 4, with $g\in L_q G.$ 
We write also $\mathcal L_X =
X^a_n \mathcal L^a_{-n}$ for an element $X\in L_q\mathfrak g.$ Adding
cochains of the type $\eta,$ which are functions of the variable $B$,
means that we are extending the original loop algebra by Fr\'echet
differentiable functions of $B$.  Since gauge transformations are
acting on $B$ by the formula $B\mapsto g^{-1}Bg + g^{-1}[\epsilon,g]$, or infinitesimally as 
$B\mapsto [B,X] + [\epsilon,X]$ for
$X\in L_q\mathfrak g,$ the commutator of $S(X)$ by any Fr\'echet
differentiable function $f$ of $B$ is given as
\[ [S(X), f(B)] = \mathcal L_X f(B) \ .\]
The other new commutation relations will be
\begin{eqnarray*} 
\text{[}S(X), S(Y) \text{]}&=& S(\text{[}X,Y\text{]})+c(X,Y;B) \\
\text{[}S(X),\psi(Y)\text{]} &=& \psi(\text{[}X,Y\text{]}) \\
\text{[}S(X),Q\text{]} &=& ic(X,\psi;B) \\
\{Q, \psi(Y)\} &=& 2i S(Y)
\end{eqnarray*}
where $c(X,Y;B)=c_0(X,Y)+(\delta\eta)(X,Y;B)$ converges for an
appropriate choice of $\eta,$ according to Theorem 4.3. 
Moreover, we set $h = Q^2$ where
\[ Q^2 = Q_0^2 - 2 S(\eta)-\eta^2 + i \mathcal L_{\psi}\eta(\psi;B)\ ,\]
$\eta^2= \eta^a_n\eta^a_{-n}$ and $S(\eta) = \eta^a_n S^a_{-n}.$ 
In Fourier basis, the generators $\big\{\psi^a_n, S^b_m, Q, h\big\}$ satisfy
the following commutation relations,
\begin{eqnarray*} 
\{\psi^a_n,\psi^b_m\} &=& 2\delta^{ab}\delta_{n,-m}\\
\text{[}S^a_n, S^b_m\text{]}&=& \lambda^{abc}S^c_{n+m}+c^{a,b}_{n,m}(B)\\
\text{[}S^a_n, \psi^b_m\text{]}&=& \lambda^{abc}\psi^c_{n+m}\\
\{\psi^a_n,Q\}&=& 2iS^a_n\\
\text{[}S^a_n,Q\text{]}&=& ic^{a,b}_{n,-m}(B)\psi^b_m\\
\text{[}\psi^a_n,h \text{]}&=& 2c^{a,b}_{n,-m}(B)\psi^b_m \\
\text{[}h,S^a_n\text{]}&=& 2S^b_m c^{a,b}_{n,-m}(B)-\psi^b_m\psi^c_p\mathcal{L}^c_{-p}c^{a,b}_{n,-m}(B)\\
\text{[}Q,h\text{]} &=& 0
\end{eqnarray*}
where $c^{a,b}_{n,m}(B)=c(S^a_n,S^b_m;B)$. For any Fr\'echet differentiable function $f=f(B)$,
\begin{eqnarray*} 
\text{[}S^a_n, f\text{]}&=& \mathcal{L}^a_{n} f\\
\text{[}Q,f\text{]}&=& i \psi^a_n\mathcal{L}^a_{-n} f\\
\text{[}h, f\text{]}&=& -2S^a_n\mathcal{L}^a_{-n} f+\psi^a_n\psi^d_q\mathcal{L}^d_{-q}\mathcal{L}^a_{-n} f \ .
\end{eqnarray*}
In the smooth case, $c^{a,b}_{n,m}(B)=kn\delta^{ab}\delta_{n,-m}$, one recovers the 
corresponding subalgebra of the superconformal current algebra, \cite{KT}. Let us consider highest weight 
representations of the loop algebra generated by $S$ and $S_0$ respectively. Since they differ by a coboundary, 
one can explicitly relate their vacua by restricting to the subalgebra of smooth loops 
$L\mathfrak{g}\subset L_q\mathfrak{g}$. Indeed, we have
\[S(X)|\Omega> = 0, \ \ \ S_0(X)|\Omega_0> = 0 \]
for all $X \in L\mathfrak{g}_{-}$, which implies
\[S(X)|\Omega_0>=\Big(S_0(X)+\eta(X;B)\Big)|\Omega_0>=\eta(X;B)|\Omega_0>\neq 0 \ .\]
However if $(\delta \eta)(X,Y;B)=0$ for all $X,Y\in L\mathfrak{g}_{-}$, then $\eta$ restricts 
to a 1-cocycle on $L\mathfrak{g}_{-}$ and can in fact be written $\eta(X;B) = \mathcal{L}_X \Phi(B)$ 
for some function $\Phi$ of the variable $B$ on the smooth
Grassmannian consisting of points $g^{-1}[\epsilon,g]$ for $g\in LG.$  
For the cochain $\eta$ in Theorem 4.3 one can choose $\Phi(B) \sim \text{Tr}\big ( \epsilon B^{2p+1} \big)$ 
and the vacua are linked according to
\[|\Omega> = e^{-\Phi(B)}|\Omega_0> \ .\]
Indeed for all $X \in L\mathfrak{g}_{-}$, we have 
\begin{eqnarray*}
S(X)|\Omega>  &= & e^{-\Phi(B)}\Big(S(X)-\mathcal{L}_X \Phi(B)\Big)|\Omega_0>\\
&=&  e^{-\Phi(B)}\Big(S_0(X)+\eta(X;B)-\mathcal{L}_X \Phi(B)\Big)|\Omega_0>=0 \ .
\end{eqnarray*}

\section{Twisted K-theory and the group $L_qG$}

We want to make sense of a family of supercharges $Q(A)$ which
transforms equivariantly under the action of the abelian extension
$\widehat{L_q G}$ of the fractional loop group $L_qG.$ This should
generalize the construction of the similar family in the case of
central extension of the smooth loop group. Let us recall the
relevance of the latter for twisted K-theory over $G$ of level
$k+h^\vee.$ We fix $G$ to be a simple compact Lie group throughout this
section.
One can think 
of elements in $K^*(G, k+h^\vee)$ as maps $f: \mathcal A \to Fred(\mathcal H),$ to 
Fredholm operators in a Hilbert space $\mathcal H,$ with the property $f(A^g)= 
\hat g^{-1} f(A) \hat g.$ Here $g\in LG$ and $\mathcal A$ is the space of
smooth $\mathfrak g$-valued vector potentials on the circle. The
moduli space $\mathcal A/\Omega G$ (where $\Omega G$ is the group of based
loops) can be identified as $G.$ Actually, one can
still use the equivariantness under constant loops so that we really
deal with the case of $G$-equivariant twisted K-theory $K^*_G(G,
k+h^\vee).$ For odd/even dimensional groups one gets elements in
$K^1/K^0.$ 
 
The real motivation here is to try to understand the corresponding
supercharge operator $Q$ arising from Yang-Mills theory in higher 
dimensions.  If $M$ is a compact spin manifold the gauge group $Map(M,G)$
can be embedded in $U_p$ for any $2p > \text{dim} M;$ this was used in \cite{MR} for 
constructing a geometric realization for the extension of $Map(M,G)$ arising from
quantization of chiral Dirac operators   in background gauge fields. This is an analogy
for our embedding of $L_qG$ in $U_p$ for $p > 1/2q,$ the index $1/2q$ playing the
role of the dimension of $M.$

The more modest aim here is to show that there is a true family of Fredholm operators which 
transforms covariantly under $L_qG$. The operators are parametrized by 1-forms on
$L_qG$ and generalize the family of Fredholm operators $Q(A)$ from the
smooth setting to the fractional case. Hopefully, this will help us understand the renormalizations
needed for the corresponding problem in gauge theory on a manifold $M.$ 

Let us denote by $L^cG$ the Banach-Lie group of continuous loops in $G.$
The natural topology of $L^cG$ is the metric topology defined as 
$$d(f,g)= \sup_{x\in S^1}\, d_G(f(x),g(x))$$
where $d_G$ is the distance function on $G$ determined by the Riemann metric. Local charts
on $L^cG$ are given by the inverse of the exponential function; at any point $f_0\in L^cG$ we can map
a sufficiently small open ball around $f_0$ to an open ball at zero in $L^c\frak{g}$ by 
$f\mapsto \log(f_0^{-1}f).$ 

In the smooth version $LG$ the topology is locally  given by the topology on $L\frak g;$
the topology of the vector space $L\frak g$ is defined by the family of seminorms 
$||X||_n= \sup_{x\in S^1} |X^{(n)}(x)|$ for a fixed norm $| \cdot|$ on $\frak g.$ 
More precisely, we can define a family of distance functions on $LG$ by
$$d_0(f,g) = \sup_{x\in S^1}\,d_G(f(x),g(x)) $$ 
for $n=0$, and 
$$d_n(f,g)=  \sup_{x\in S^1} |f^{(n)}(x) -g^{(n)}(x)| $$
for $n>0$, where $f^{(n)}$ is the n:th derivative with respect to the loop parameter: We identify
the first derivative as a function with values in $\frak{g}$ by left translation $f'\mapsto f^{-1}f'$ and then
all the higher derivatives are $\frak{g}$-valued functions on the circle.

The metric is then defined as
$$d(f,g) = \sum_{n\geq 0} \frac{d_n(f,g)}{1+d_n(f,g)} 2^{-n}\ .$$

The subgroup $LG \subset L^cG$ is dense in the topology of the latter.
For this reason the cohomology of $L^cG$ is completely determined by restriction to
$LG.$
Actually, we have a stronger statement:

\begin{lemma} (Carey-Crowley-Murray) The group $L^cG$ of continuous loops is homotopy equivalent to the smooth loop group $LG.$  \end{lemma}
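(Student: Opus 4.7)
The plan is to construct a smoothing map $s : L^cG \to LG$ and show it is a homotopy inverse to the inclusion $i : LG \hookrightarrow L^cG$. Using the faithful representation $\rho$ of the paper, I would realize $G$ as a closed compact submanifold of $\mathrm{End}(V)$, which has a tubular neighborhood $U$ equipped with a smooth nearest-point projection $\pi : U \to G$ and a uniform $\varepsilon_0 > 0$ such that every point within distance $\varepsilon_0$ of $G$ lies in $U$. Fix a smooth approximate identity $\{\kappa_t\}_{t > 0}$ on $S^1$, for instance the heat kernel, so that for each continuous $f$ the convolution $\kappa_t \ast f$ is smooth in $\mathrm{End}(V)$ and converges uniformly to $f$ as $t \to 0^+$.

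Given $f \in L^cG$, the uniform continuity of $f$ provides some $t(f) > 0$ with $\|\kappa_s \ast f - f\|_\infty < \varepsilon_0$ for all $s \in [0, t(f)]$, so that $(\kappa_s \ast f)(x) \in U$ for every $x \in S^1$. Since $L^cG$ is metrizable and hence paracompact, a partition of unity subordinate to the open cover $\{V_n = \{f : t(f) > 1/n\}\}$ yields a continuous positive function $\tau : L^cG \to (0, \infty)$ with $\tau(f) \leq t(f)$. Then $s(f) = \pi(\kappa_{\tau(f)} \ast f)$ defines a continuous map into the Fr\'echet space $LG$, since differentiating $\kappa_t \ast f$ in the loop parameter passes to the smooth kernel and $\pi$ is smooth on $U$.

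Both homotopies come from the single formula $H(f, r) = \pi(\kappa_{r \tau(f)} \ast f)$ for $r \in [0,1]$, with the convention $\kappa_0 = \delta$. Then $H(f, 0) = \pi(f) = f$ because $f(x) \in G$, and $H(f, 1) = i \circ s(f)$; the choice of $\tau$ keeps all intermediate values in $U$, so $H$ is well defined in $L^cG$ and gives $i \circ s \simeq \mathrm{id}_{L^cG}$, while its restriction to $LG$ stays in $LG$ and gives $s \circ i \simeq \mathrm{id}_{LG}$. The main obstacle is the construction of the continuous parameter $\tau$: the pointwise optimal $t(f)$ is only lower semi-continuous, and producing a genuine continuous map $s$ of topological spaces requires refining it using paracompactness; once $\tau$ is in hand, the remaining checks reduce to smoothness of convolution with a smooth kernel and of the nearest-point projection on the tubular neighborhood.
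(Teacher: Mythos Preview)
Your argument is correct, but it follows a different route from the paper. The paper does not build an explicit smoothing map; instead it reduces to the based loop groups $\Omega G\subset \Omega^cG$, quotes Carey--Crowley--Murray for the fact that this inclusion is a \emph{weak} homotopy equivalence (every continuous map from a cube into $G$ is homotopic to a smooth one, and likewise for homotopies), and then invokes a general principle---Palais' theorem that a weak homotopy equivalence between metrizable manifolds is a genuine homotopy equivalence, or alternatively Milnor's result giving the CW type of $\Omega^cG$ together with Whitehead's theorem---to upgrade this to an actual homotopy equivalence.

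Your approach is more hands-on: you mollify by convolution with an approximate identity, project back to $G$ via a tubular neighbourhood, and use paracompactness to choose the smoothing time continuously. This has the advantage of being self-contained and of producing an explicit homotopy inverse $s:L^cG\to LG$, which is exactly the kind of map $F$ the paper needs (and uses without construction) later in Section~6. The paper's approach is shorter because it outsources the work to general theorems, but it is less constructive. One point you should make explicit for completeness: the restricted homotopy $H|_{LG\times[0,1]}$ must be continuous in the Fr\'echet topology of $LG$, not merely in $L^cG$; this holds because for smooth $f$ one has $\partial^n(\kappa_t*f)=\kappa_t*\partial^n f\to\partial^n f$ uniformly as $t\to 0$, but it is worth saying.
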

\begin{proof} We may assume that $G$ is connected; otherwise, one repeats the proof for each component of $G.$ When $G$ is connected the full loop group is a product of $G$ and the 
group $\Omega G$ of based loops (whether continuous, smooth, or of type $L_q$). So we restrict
to the group of based loops. 
As shown in \cite{CCM}, the groups $\Omega^cG$ and $\Omega G$ are weakly homotopic, i.e. the
inclusion $\Omega G\subset \Omega^cG$ induces an isomorphism of the homotopy groups. According to
the Theorem 15 by R. Palais, \cite{Pa}  a weak homotopy equivalence of metrizable manifolds implies homotopy equivalence. Actually, in \cite{CCM} the authors use the CW property of
the loop groups for the last step. The CW property in the case of $\Omega^cG$ is a direct consequence of Theorem 3 in \cite{Mi}. \end{proof}

\begin{lemma} The group $L_qG$ is homotopy equivalent to $LG$ and thus also to $L^c G.$
\end{lemma}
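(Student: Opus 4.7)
\medskip

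\noindent\textbf{Proof plan.} The strategy mirrors the previous lemma. Since $G$ is connected, evaluation at a basepoint splits the extension $\Omega_q G \to L_q G \to G$ and gives a homeomorphism $L_q G \cong G \times \Omega_q G$; the analogous splittings hold for $LG$ and $L^c G$. Thus it suffices to show that the chain of continuous inclusions $\Omega G \hookrightarrow \Omega_q G \hookrightarrow \Omega^c G$ induces isomorphisms on all homotopy groups. All three groups are metrizable manifolds (Banach for $\Omega_q G$ and $\Omega^c G$, Fréchet for $\Omega G$), so by Palais's Theorem 15, as already invoked in the previous lemma, a weak homotopy equivalence automatically upgrades to a genuine homotopy equivalence. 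By the previous lemma the composition $\Omega G \hookrightarrow \Omega^c G$ is already a weak equivalence, so it is enough to show that one of the two intermediate inclusions is a weak equivalence.

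\medskip

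\noindent\textbf{The main step: a smoothing deformation.} I would construct a continuous smoothing map $H:[0,1]\times L_q G \to L_q G$ with $H(0,\cdot)=\mathrm{id}$ and $H(1,\cdot)$ landing in $LG$. Fix a faithful matrix embedding $G\subset \mathrm{GL}(V)$, and let $U\subset \mathrm{GL}(V)$ be a tubular neighborhood of $G$ with smooth nearest-point retraction $\pi:U\to G$. For each $t>0$ the heat-kernel operator $e^{-t\Delta_{S^1}}$ acts on $V$-valued functions by the Fourier multiplier $e^{-tk^2}$; it is a bounded operator on every $H^q(S^1,V)$ with norm uniformly bounded in $t\geq 0$, and as an approximate identity it satisfies $e^{-t\Delta}\phi \to \phi$ uniformly on $S^1$ whenever $\phi$ is continuous. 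By Definition~2 every element of $L_q G$ is continuous, so for $t$ sufficiently small the smoothed loop $\phi_t=e^{-t\Delta}\phi$ takes values in $U$, and $\tilde\phi_t := \pi\circ\phi_t$ defines a smooth loop in $G$. Setting
\[
H(s,\phi)\;=\;\pi\circ e^{-st_0\Delta}\phi,\qquad s\in[0,1],
\]
with $t_0$ chosen small enough that the image stays in $U$, produces the required continuous deformation from $\phi$ to a smooth loop, without leaving $L_q G$.

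\medskip

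\noindent\textbf{Applying this to homotopy classes.} Given a continuous map $f:S^n\to \Omega_q G$, its image is compact, so $t_0$ can be chosen uniformly along $f(S^n)$, and $s\mapsto H(1,f(\cdot))$ gives a map $S^n\to \Omega G$ together with an explicit homotopy to $f$ inside $\Omega_q G$. The same construction applied to a homotopy between two smooth families shows that two maps $S^n\to \Omega G$ homotopic inside $\Omega_q G$ are already homotopic inside $\Omega G$. Hence $\pi_n(\Omega G)\to \pi_n(\Omega_q G)$ is an isomorphism for all $n\geq 0$, and combining with the previous lemma one obtains $\Omega_q G \simeq \Omega G \simeq \Omega^c G$ as claimed.

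\medskip

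\noindent\textbf{Main obstacle.} The delicate point is verifying that $H(s,\cdot)$ genuinely maps $L_q G$ into $L_q G$ and depends continuously on $\phi$ in the $L_q G$ topology, rather than only in the uniform topology. This amounts to showing that post-composition with the smooth retraction $\pi$ preserves the Schatten-type condition $[\epsilon, M_g]\in L_{2p}$ used in Definition~2, and that $e^{-t\Delta}$ is continuous jointly in $(t,\phi)$ down to $t=0^+$ in this topology. For $q>\tfrac12$ the space $H^q$ is a Banach algebra and closed under composition with smooth maps, so the claim is standard; for $0<q<\tfrac12$ one has to invoke the commutator estimates already established in the proof of Proposition~1, applied to $\pi\circ\phi_t$ rather than $\phi$, to confirm that the $GL_p$-condition is retained. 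Once this continuity is in hand, the rest of the argument is formal and the homotopy equivalence follows from Palais's theorem as in Lemma~1.
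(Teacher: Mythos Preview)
Your outline is sound and would succeed once the technical point you flag is settled, but the route differs from the paper's.

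Both arguments reduce to based loops and finish with Palais's theorem, so the only substantive difference is how the weak equivalence $\Omega G\hookrightarrow\Omega_qG$ is obtained. The paper does \emph{not} smooth loops directly. Instead it uses the loop--suspension adjunction: a class in $\pi_{n-1}(\Omega_qG)$ is represented by a map $(I^n,\partial I^n)\to(G,e)$, and one appeals to the classical \emph{finite-dimensional} smooth approximation theorem to replace this by a smooth map $I^n\to G$ in the same relative homotopy class. Read back through the adjunction, that smooth map lands in $\pi_{n-1}(\Omega G)$; continuity of the inclusion $LG\hookrightarrow L_qG$ (from the Sobolev estimates in the proof of Proposition~3.1) then transports the homotopy into $\Omega_qG$. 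All the analysis thus takes place on the compact manifold $G$, never on the infinite-dimensional group.

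Your heat-kernel deformation is more constructive and morally stronger---it aims at an actual deformation of $L_qG$ into $LG$ rather than a mere $\pi_*$-isomorphism---but the price is exactly the obstacle you identify: for $q<\tfrac12$ one must check that $\phi\mapsto\pi\circ e^{-t\Delta}\phi$ is jointly continuous in the $GL_{1/(2q)}$ topology down to $t=0$, and that composition with the retraction $\pi$ preserves the Schatten condition on $[\epsilon,\,\cdot\,]$. The commutator estimates you cite from Proposition~2.1 concern boundedness of $[D^q,X]$, not the $L_{2p}$ norm of $[\epsilon,X]$, so they do not by themselves close this gap; one would rather need a nonlinear version of the estimate in Proposition~3.1. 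The paper's adjunction trick simply sidesteps this issue.
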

\begin{proof}  The proof in \cite{CCM} can be directly adapted from the smooth setting 
to the larger group $L_qG.$ Let $M$ be a compact manifold with base point $m_0$ and 
$C((I^n, \partial I^n), (M,m_0))$ the set of continuous maps from the n-dimensional unit
cube to $M$ such that the boundary of the cube is mapped to $m_0.$ The key step in their proof is the observation that in the homotopy
class of any map $g\in C((I^n, \partial I^n), (M,m_0))$ there exists a smooth map; in addition, a 
homotopy can be given in terms of a differentiable map. Taking $M=G$ 
and thinking of $g$ as a representative for an element in the homotopy group $\pi_{n-1}(\Omega^c G).$
Since $g$ is homotopic to a smooth map, it also represents an element in the smooth homotopy
group of $\Omega G$ and thus also an element in the $(n-1)$:th homotopy group of
$\Omega_q.$ In addition, a continuous homotopy is equivalent to a smooth homotopy.
The embedding of $LG \subset L_qG$ is continuous in their respective topologies [this follows
from the Sobolev norm estimates in the proof of Proposition 3.1] and therefore
the representatives for the homotopy groups of the former are mapped to representatives of
the homotopy groups of the latter. \end{proof}

Let $F: L^cG \to LG$ be a smooth homotopy equivalence. We define 
$$ \theta(f;g) = F(f)^{-1} F(fg)$$
for $f,g\in L^cG.$ This is a 1-cocycle in the sense of
$$ \theta(f;gg') = \theta(f;g) \theta(fg;g')\ .$$ 
For any $g\in LG$ we then have
$$\hat{\theta}(f;g)^{-1} Q_0\hat{\theta}(f;g) = Q_0 + i\bar{k}<\psi, \theta(f;g)^{-1} \partial \theta(f;g)>$$
where $\hat \theta $ is the lift of $\theta$ to the central extension $\widehat{LG}.$ 
Here $\partial$ is the differentiation with respect to the loop parameter.

Since the homotopy $F$ is smooth we can define a Lie algebra cocycle
$$d\theta(f;X) = \frac{d}{dt}|_{t=0} \theta(f; e^{tX})$$
with values in $L\frak g$ for $X\in L^c\frak g.$ This is a 1-cocycle in the sense that
$$d\theta(f;[X,Y]) +\mathcal L_X d\theta(f;Y)-\mathcal L_Y d\theta(f;X) = [d\theta(f;X), d\theta(f;Y)]\ .$$

Let next $Q(A) = Q_0 +i\bar{k}  <\psi, A>$ be a perturbation of $Q_0$ by a  function 
$A:L^cG \to L^c\frak g.$  The group $L^cG$ acts on $A$ by right translation,
$(g\cdot A)(f) = A(fg).$ Denote by $\hat \Theta(g)$ the operator consisting of the right translation on
functions of $f$ and of $\hat{\theta}(\cdot;g)$ acting on values of functions in the Hilbert space
$\mathcal H.$ Then 
$$ \hat\Theta(g)^{-1} Q(A) \hat\Theta(g) = Q(A^g)\ ,$$
where
\begin{eqnarray}\label{gaugetransf}
 (A^g)(f) = \theta(f;g)^{-1}A(fg)\theta(f;g) + \theta(f;g)^{-1} \partial\theta(f;g)\ . \end{eqnarray}
Since the group $LG$ acts in $\mathcal H$ through its central extension $\widehat{LG},$ the Lie
algebra $L^c\frak{g}$ acts through its abelian extension by $Map(L^cG, i\mathbb R),$ the extension
being defined by the 2-cocycle
$$ \omega(f;X,Y) = [\widehat{d\theta}(f;X), \widehat{d\theta}(f;Y)] - \widehat{d\theta}(f;[X,Y])
-\mathcal L_X \widehat{d\theta}(f;Y) +\mathcal L_Y\widehat{d\theta}(f;X)\ .$$

For an infinitesimal gauge transformations $X$, the formula (\ref{gaugetransf}) leads to
\begin{eqnarray}\label{gauge} \delta_X A = [A, d\theta(f;X)] + \partial \theta(f;X) + \mathcal L_X A \end{eqnarray}
which should be compared with $\delta_X A = [A,X] + \partial X$ in the smooth case, for
constant functions $A: LG  \to L\frak g.$ 

The Lie algebra cohomology of any Lie algebra, with coefficients in the module of smooth
functions on the Lie group $G$, is by definition the same as the de Rham cohomology of
$G$. Indeed, take a cocycle $c$ in de Rham cohomology on $G$. It is an alternating multilinear form on
on the space of vector fields on $G$, with values in the space of smooth functions on $G$.
We can restrict it to left invariant vector fields on $G$; but the left invariant vector fields
are just elements of the Lie algebra of $G$. So we obtain an alternating multilinear form on
the Lie algebra of $G$, with values in the space of smooth functions on $G$. Looking at the definition
of a de Rham cocycle (in terms of smooth vector fields) one sees that the cocycle condition is exactly the
same as the Lie algebra cocycle condition, with values in $C^{\infty}(G)$, with the standard action of
the Lie algebra (as derivations) on functions. Thus we get a linear map from the space of de Rham cocycles 
to Lie algebra cocycles. Exact cocycles on the de Rham side map to exact Lie algebra cocycles, so we have 
a map between the cohomologies. This is an isomorphism since the de Rham forms are uniquely determined by the
restriction to left invariant vector fields (at each point on G the left invariant vector fields form a
basis).

We apply this to the loop group $LG$ together with the fact that the standard central extension
of $L\frak g,$ when viewed as a left invariant 2-form on $LG,$ generates $H^2(LG, \mathbb R).$ Thus
the Lie algebra cohomology of $L\frak g$ with coefficients in the module of smooth functions
on $LG$ is one dimensional.  

By  the Lemma 6.2 we have 

\begin{lemma} For a simple compact Lie group $G$ the Lie algebra cohomology in degree 2 of $L^c\frak{g}$ with coefficients in the module of smooth
functions on $L^cG$ (with respect to the Banach manifold structure) is one dimensional and the class
of a cocycle is fixed by the restriction to the smooth version $LG.$  \end{lemma}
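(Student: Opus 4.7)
The plan is to reduce the computation to the de Rham cohomology of $L^cG$ via the standard Chevalley--Eilenberg identification, and then to invoke the homotopy equivalence from Lemma 6.2 together with the known computation $H^2(LG,\mathbb R)=\mathbb R$ for $G$ simple and compact.

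First I would recall the general principle, already stated in the paragraph preceding the lemma: for a (Banach-)Lie group $H$ with Lie algebra $\mathfrak h$, the Chevalley--Eilenberg complex of $\mathfrak h$ with coefficients in the module $C^\infty(H)$ of Fr\'echet-differentiable functions (with $H$ acting by right translation) is isomorphic, via evaluation of a cochain at the identity together with left-translation to spread the value over $H$, to the de Rham complex of left-invariant smooth differential forms — and the latter is quasi-isomorphic to the full de Rham complex of $H$. In particular
\[
H^k\bigl(\mathfrak h, C^\infty(H)\bigr) \;\cong\; H^k_{\mathrm{dR}}(H).
\]
Applied to $\mathfrak h = L^c\mathfrak g$ and $H=L^cG$, this reduces the claim to showing that $H^2_{\mathrm{dR}}(L^cG)$ is one-dimensional and that the restriction map to $H^2_{\mathrm{dR}}(LG)$ is an isomorphism.

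Next I would invoke Lemma 6.2, which gives a homotopy equivalence $L^cG \simeq LG$. Homotopy equivalence preserves singular cohomology, hence also de Rham cohomology on paracompact Banach manifolds modeled on Fr\'echet/Banach spaces (where the de Rham theorem remains valid). Since for a simple compact $G$ one has $H^2_{\mathrm{dR}}(LG)=\mathbb R$, generated by the left-invariant 2-form whose value at the identity is the Kac--Moody cocycle $c_0$ (this is the standard fact quoted in the preceding paragraph and going back to Pressley--Segal \cite{PrSe}), it follows that $H^2_{\mathrm{dR}}(L^cG)=\mathbb R$ as well, and that the inclusion $LG \hookrightarrow L^cG$ induces an isomorphism on $H^2$. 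Translating back through the Chevalley--Eilenberg isomorphism, the class of a 2-cocycle in $H^2(L^c\mathfrak g, C^\infty(L^cG))$ is determined by its restriction to $H^2(L\mathfrak g, C^\infty(LG))$, as claimed.

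The main technical point — and the only place where real care is required — is the validity of the Chevalley--Eilenberg identification and the de Rham theorem in the Banach setting with coefficients in smooth (i.e. Fr\'echet differentiable) functions on an infinite-dimensional metrizable group. For the smooth loop group $LG$ this is classical, but for $L^cG$ one must check that the continuous loop group admits enough smooth partitions of unity and that the comparison map between left-invariant differential forms and general ones is a quasi-isomorphism; both follow from the fact that $L^cG$ is a paracompact Banach manifold locally modeled on the separable Banach space $L^c\mathfrak g$, together with the smoothness of right translation on $C^\infty(L^cG)$. Once this framework is in place, the remaining steps are a direct application of the homotopy equivalence and the known $H^2$ of the smooth loop group.
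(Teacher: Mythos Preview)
Your proposal is correct and follows essentially the same approach as the paper: identify the Lie algebra cohomology with coefficients in $C^\infty(L^cG)$ with the de Rham cohomology of $L^cG$ (the paper states this principle in the paragraph immediately preceding the lemma), then use the homotopy equivalence with $LG$ together with the known one-dimensionality of $H^2(LG,\mathbb R)$. The paper's own ``proof'' is just the phrase ``By the Lemma 6.2 we have\ldots'', so your version is simply a more explicit rendering of the same argument, including the honest acknowledgment that the de Rham theorem and the Chevalley--Eilenberg identification in the Banach-manifold setting require some care.
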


\begin{remark} \rm We can easily produce an explicit homotopy connecting the standard 2-cocycle on $L\mathfrak g$
to the restriction of the 2-cocycle on $L^c \mathfrak g$ to the smooth subalgebra $L\mathfrak g.$
Let $F_s : LG \to LG$ be any one-parameter family of maps connecting the identity $F_1$ to the
map  $F_0= F\circ i:  LG\to LG$ where $i:LG \to L^c G$ is the inclusion and $F:L^cG \to LG$ is  the homotopy equivalence used before,
$0\leq s\leq 1.$   On $LG$ we define
$$\eta =
 \int_0^1 \text{Tr} \Big( [\epsilon, F_s^{-1}\delta F_s]   F_s^{-1} \partial_s F_s\Big) \ .$$
Denote $c_0(X,Y) = \frac12 \text{Tr}\big( X[\epsilon, Y]\big)$, the standard Lie algebra 2-cocycle on $L\mathfrak g,$
and $c_1(X,Y) = \frac12 \text{Tr}\big( d\theta(f;X) [\epsilon, d\theta(f;Y)]\big)$ the 2-cocycle coming from the
homotopy $F_0.$ Then one checks that, for the restriction of $c_1$ to $LG,$
$$c_0 -c_1 = \delta\eta \ .$$
\end{remark}

\begin{proposition} The unitary subgroup  $U_p\subset GL_p$ for each $p\geq \frac 12$ is smoothly homotopic to
$U_{\frac 12}.$  \end{proposition}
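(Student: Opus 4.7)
The plan is to show that the inclusion $U_1 \hookrightarrow U_p$ --- continuous because $L_2 \subset L_{2p}$ for $p \geq 1$ --- is a homotopy equivalence, by constructing an explicit smooth deformation within $U_p$ that ends inside $U_1$. The structural tool is the CS (cosine--sine) decomposition of a unitary $A$ on $\mathcal{H} = \mathcal{H}_+ \oplus \mathcal{H}_-$. In block form $A = (A_{ij})$, unitarity forces $|A_{+-}|$ and $|A_{-+}|$ to share a common spectrum $\{\sin\theta_k(A)\}_{k\geq 1}$ of principal angles $\theta_k \in [0, \pi/2]$. Setting $\Theta_A := \arcsin\sqrt{A_{-+}^* A_{-+}}$, the positive operators $\sin\Theta_A$ and $\cos\Theta_A$ capture the polar factors of the off-diagonal and diagonal blocks respectively. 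The condition $A \in U_p$ then becomes $\{\sin\theta_k\} \in \ell^{2p}$, while $A \in U_1$ corresponds to $\{\sin\theta_k\} \in \ell^2$.

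I would next choose a smooth one-parameter family $\phi_t: [0, \pi/2] \to [0, \pi/2]$ with $\phi_0 = \mathrm{id}$, boundary conditions $\phi_t(0) = 0$ and $\phi_t(\pi/2) = \pi/2$, and $\phi_1(\theta) = O(\theta^p)$ near the origin, so that $\phi_1$ sends $\ell^{2p}$-sequences into $\ell^2$-sequences. The homotopy $A(t)$ is then defined by applying $\phi_t$ to the principal angles via functional calculus on $\Theta_A$: each block $A_{ij}$ is multiplied on the right by an operator-valued function of $\Theta_A$ built from ratios of the form $\cos\phi_t(\theta)/\cos\theta$ and $\sin\phi_t(\theta)/\sin\theta$, which extend smoothly across the endpoints by L'H\^opital's rule thanks to the boundary conditions on $\phi_t$. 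This construction is intrinsic (no arbitrary choice of CS unitaries enters), preserves unitarity throughout, keeps $A(t) \in U_p$ because $\sin\phi_t(\theta) \leq M \sin\theta$ for a Lipschitz family $\phi_t$, and at $t = 1$ yields $A(1) \in U_1$ from the tail estimate $\sin\phi_1(\theta_k) = O(\sin^p \theta_k)$.

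The main obstacle will be establishing continuity of $A \mapsto A(t)$ in the Banach topology of $U_p$ at unitaries where $\Theta_A$ has $0$ or $\pi/2$ in its spectrum, i.e.\ where $\sin\Theta_A$ or $\cos\Theta_A$ has nontrivial kernel. Since $A \in U_p$ forces $\theta_k \to 0$, only finitely many angles can exceed any positive threshold, so the potentially pathological subspace is finite-dimensional. By carrying out the entire construction through continuous functional calculus of the single positive operator $\Theta_A$ --- rather than separating out partial isometries in the polar decompositions of the blocks --- the required continuity reduces to standard continuity properties of the functional calculus in operator norm and Schatten norm. Finally, by arranging $\phi_t$ to be the identity on the $\ell^2$ regime of the spectrum (or simply applying the general Palais criterion already invoked in Lemma 6.1), one obtains the homotopy equivalence $U_p \simeq U_1$ asserted by the proposition.
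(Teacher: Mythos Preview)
Your approach via principal angles and functional calculus is genuinely different from the paper's. The paper argues inductively: for $g\in U_p$ with blocks $\alpha,\beta,\gamma,\delta$ it sets $x=\alpha\gamma^{*}-\beta\delta^{*}\in L_{2p}$, defines the unitary $h(g)=\exp\left(\begin{smallmatrix}0&-x/2\\ x^{*}/2&0\end{smallmatrix}\right)$, and checks by a short computation using the unitarity relations $\alpha^{*}\beta+\gamma^{*}\delta=0$, $\beta^{*}\beta+\delta^{*}\delta=1$ that the off-diagonal block of $F(g)=h(g)^{-1}g$ equals $\beta\beta^{*}\beta$ modulo $L_p$, hence lies in $L_{2p/3}\subset L_p$, so $F(g)\in U_{p/2}$. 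Replacing $x$ by $tx$ for $t\in[0,1]$ gives the homotopy, and smoothness is automatic because the whole map is a composition of the exponential with rational functions of the block entries; one then iterates $U_p\to U_{p/2}\to\cdots\to U_1$.

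Your spectral deformation is more direct (one step from $U_p$ to $U_1$) and conceptually clean, but it trades algebraic simplicity for analytic work you have not fully carried out. Two points need attention. First, a single angle operator $\Theta_A$ on $\mathcal H_+$ cannot serve all four blocks: the second column $A_{+-},A_{--}$ requires the companion operator $\arcsin|A_{+-}|$ on $\mathcal H_-$ (or left multiplications) for $A(t)$ to remain unitary. Second, and more seriously, the proposition asks for a \emph{smooth} homotopy, which in your construction means the ratios $\cos\phi_t(\theta)/\cos\theta$ and $\sin\phi_t(\theta)/\sin\theta$ must be smooth as functions of the spectral variable $\sin^{2}\theta\in[0,1]$; this forces the odd-order Taylor coefficients of $\phi_t$ at $0$ and $\pi/2$ to vanish, a constraint you have not imposed, and the smoothness of the functional calculus in the Schatten topology then still needs to be invoked explicitly. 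None of this is fatal, but the paper's algebraic route delivers smoothness in one line, whereas yours still owes that verification.
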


\begin{proof} We prove the claim inductively by constructing a homotopy equivalence from
$U_p$ to $U_{p/2}$ for all $p\geq 1$. Let $g\in U_p$ with
$$g= \left( \begin{matrix} \alpha & \beta \\ \gamma & \delta \end{matrix}\right).$$
Let us denote $x= \alpha\gamma^* -\beta\delta^* \in L_{2p}.$  Define
the unitary operator $h(g)$
$$h(g) = \exp\left( \begin{matrix}  0 & -x/2 \\ x^*/2 & 0 \end{matrix}\right).$$
Let $F(g) = h(g)^{-1} g.$ Using $x^2 \in L_p,$ by a direct computation one checks that the upper right block in this
operator is equal to
$$ \beta +\frac12(\alpha\gamma^* -\beta\delta^*) \delta \text{ mod } L_p\ .$$
Using the unitarity relations $\alpha^*\beta +\gamma^*\delta =0$ and $\beta^*\beta +\delta^*\delta
=1$ we see that the above operator is equal to $\beta\beta^*\beta.$ Since $\beta\in L_{2p},$ by
the operator H\"older inequalities $\beta\beta^*\beta$ is in $L_{2p/3} \subset L_{p}.$ 
Thus $F(g) \in U_{p/2}.$ This map is homotopic to the identity map in $U_p:$ one just needs 
to replace the operator $x$ by $tx$ where $0\leq t \leq 1.$ At $t=0$ we then have $F_t(g) =g.$ 
Since the blocks of $F(g)$ are rational functions in the blocks of $g$ without singularities,
the map $g\mapsto F(g)$ is smooth in the natural Banach manifold structures of the groups
$U_p$ and $U_{p/2}.$  The same argument shows that the identity map on $U_r$ contracts 
to  a map to the subgroup $U_{p/2}$ for any $p\geq r \geq p/2.$ 
\end{proof}

According to \cite{PrSe} the restriction of the standard 2-cocycle of the Lie algebra of $U_1$ 
(denoted by $U_{res}$ in \cite{PrSe}) to the subalgebra $L\frak{g}$ is the cocycle defining an affine 
Kac-Moody algebra. In combination with  Lemma 6.3 and Proposition 6.5 we conclude:

\begin{proposition} The restriction from $L^cG$ to $L_qG$ of the Lie algebra cocycle $\omega$ is equivalent to the Lie algebra cocycle  of Theorem 4.1  (obtained by restriction of $c_p$ from $GL_p$ to $L_q G$ with $p = \text{max}\left\{\frac 12, \frac{1}{2q}\right\}$).
\end{proposition}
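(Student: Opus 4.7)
The plan is to reduce the comparison to a cohomological statement over the smooth loop algebra $L\mathfrak{g}$, where both relevant classes are well understood, and then lift back using the homotopy equivalences of Lemma 6.2. First I would extend Lemma 6.3 from $L^c\mathfrak{g}$ to $L_q\mathfrak{g}$: by Lemma 6.2 we have $LG \simeq L_qG \simeq L^cG$ as topological groups, so $H^2_{dR}(L_qG) \cong H^2_{dR}(LG)$ is one-dimensional, generated by the class of the standard affine Kac-Moody cocycle viewed as a left-invariant 2-form. Consequently the Lie algebra cohomology $H^2(L_q\mathfrak{g}, C^\infty(L_qG))$ is one-dimensional, and any cohomology class is determined by its restriction to the dense smooth subalgebra $L\mathfrak{g}$.

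Next I would identify the restrictions of the two cocycles to $L\mathfrak{g}$. For the cocycle of Theorem 4.1, Theorem 4.2 shows that $c_p|_{L\mathfrak{g}}$ is cohomologous to the standard Kac-Moody cocycle $c_0(X,Y)=\tfrac12\,\text{Tr}(X\,dY)$; independently, Proposition 6.4 together with the result of \cite{PrSe} that the standard 2-cocycle on $U_1=U_{res}$ restricts to the affine cocycle on $L\mathfrak{g}$ transports this class to $c_p|_{L\mathfrak{g}}$ via the homotopy equivalences $U_p \simeq U_1$. For $\omega$, the Remark following Lemma 6.3 produces an explicit 1-cochain $\eta$ given by a convergent trace integral with $c_0 - \omega|_{L\mathfrak{g}} = \delta \eta$. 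Hence both cocycles represent the affine Kac-Moody class on $L\mathfrak{g}$, and by the one-dimensionality from the first step they are cohomologous on the whole of $L_q\mathfrak{g}$.

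The main obstacle is the first step, specifically the passage from de Rham cohomology of the Banach manifold $L_qG$ to Lie algebra cohomology of $L_q\mathfrak{g}$ with smooth function coefficients, together with the verification that the bounding cochain $\eta$ from the Remark extends continuously from $L\mathfrak{g}$ to $L_q\mathfrak{g}$. This extension should use the Schatten-class bounds from Proposition 3.1, which ensure that the trace expression defining $\eta$ converges for loops in $L_qG$ whenever $p \geq 1/(2q)$, so that the coboundary equation $c_0 - \omega = \delta\eta$ established on $L\mathfrak{g}$ lifts to a genuine coboundary equation on the whole of $L_q\mathfrak{g}$. Once this regularity issue is settled, the argument collapses to the elementary statement that two elements of a one-dimensional vector space which agree under an injective restriction map are equal.
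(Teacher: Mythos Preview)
Your first two paragraphs are essentially the paper's argument: use the homotopy equivalences of Lemma 6.2 to transport Lemma 6.3 to $L_q\mathfrak{g}$, so that $H^2(L_q\mathfrak{g}, C^\infty(L_qG))$ is one-dimensional and detected by restriction to $L\mathfrak{g}$; then observe, via Theorem 4.2 (or Proposition 6.4 together with [PS]) and the Remark after Lemma 6.3, that both $c_p$ and $\omega$ restrict on $L\mathfrak{g}$ to the affine Kac--Moody class. This is exactly what the paper records in the sentence preceding the Proposition.

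Your third paragraph, however, misidentifies the remaining work. The coboundary equation $c_0 - \omega = \delta\eta$ \emph{cannot} lift from $L\mathfrak{g}$ to $L_q\mathfrak{g}$ when $q<\tfrac12$: the cocycle $c_0(X,Y)=\tfrac12\,\mathrm{Tr}\big(X[\epsilon,Y]\big)$ diverges there, which is the entire reason the paper introduces the renormalized cocycles $c_p$ in Sections 3--4. No Schatten bound from Proposition 3.1 will repair this, since the off-diagonal blocks are only in $L_{2p}$ with $p>1$. Fortunately, you do not need any such extension. The injectivity of the restriction map on $H^2$ follows directly from the homotopy equivalence of Lemma 6.2 (identifying Lie algebra cohomology with function coefficients as de Rham cohomology of the group, a homotopy invariant), not from producing an explicit bounding cochain on the larger algebra. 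So drop the attempted extension of $\eta$ and simply invoke the one-dimensionality argument you already set up; the proof is then complete as in the paper.
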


\begin{remark} \rm The discussion in Remark 6.4 applies here as well. By a similar formula one can 
produce an explicit homotopy between the standard 2-cocycle on the Lie algebra of $U_{\frac 12}$ 
and the cocycle obtained from the restriction from $U_p$ to the subgroup $U_{\frac 12}$, for  
$p\geq \frac 12.$ 
\end{remark}

We close this section by showing that in the fractional setting, the transformation \eqref{gauge} of the field $A:L_qG\to L_q\mathfrak{g}$ under $L_qG$ has a more geometric interpretation.

Using the inner product $<X,Y>= \int_{S^1} \left(X(x), Y(x)\right)_{\mathfrak{g}}dx$ in $L_q\frak{g},$ where $(\cdot,\cdot)_\mathfrak{g}$ is an invariant inner product in $\frak{g},$  and the fact a Lie group is a parallelizable manifold we can think of $A$ as a
1-form on the loop group $L_qG.$  We denote by $\mathcal A$ the space of all Fr\'echet differentiable 1-forms on
$L_qG.$ There is a  circle bundle $P$ over $L_qG$
with connection and curvature; the  
curvature $\omega$ is given by the cocycle of the abelian extension, $\omega(g;X,Y)$
where the elements $X,Y\in T_g (L_qG)$  are
identified as left invariant vector fields (elements of the Lie
algebra of $L_qG$). Let us denote by $\widehat{L_q G}$ the
extension of $L_qG$ by the abelian normal subgroup
$Map(L_qG,S^1)$ corresponding to the given Lie algebra extension. 

Conversely, starting from the abelian extension $\widehat{L_qG},$ viewed as a principal
bundle over $L_qG$ with fiber $Map(L_qG, S^1),$ we can recover the geometry of the 
the circle bundle $P.$  The connection in $P$ is obtained as follows. First, the connection form
in $\widehat{L_qG}$ is given as 
$$\Psi = Ad_{\hat{g}}^{-1} \text{pr}_c (d\hat{g}\hat{g}^{-1}) $$
where $\text{pr}_c$ is the projection onto the abelian ideal $Map(L_qG, i\mathbb{R}).$ 

\begin{remark} \rm In the case of the central extension $\widehat{LG}$ of $LG$ the adjoint action on
$\text{pr}_c (\bullet)$ is trivial but in the case of the abelian extension of $L_qG$ it is needed
in order to guarantee that the connection form is tautological in the vertical directions in the
tangent bundle $TP.$
\end{remark}

The connection $\nabla$ on $P$ is then defined using the identification of
$P$ as the subbundle of $\widehat{L_qG} \to L_qG$ with fiber 
$S^1$ consisting of constant functions in $Map(L_qG, S^1),$ that is, at each base point $g\in L_qG$
we have the homomorphism $\gamma$ sending a function $f\in Map(L_qG,S^1)$ to the value
of $f$ at the neutral element.
 This induces a homomorphism $d\gamma: Map(L_qG, i\mathbb{R}) \to i\mathbb{R}$
defining the $i\mathbb{R}$-valued 1-form $\psi=d\gamma \circ \Psi$, defining the covariant differentiation
$\nabla$ in the associated complex line bundle $L.$

An arbitrary connection in the bundle $P$ is then written as a sum
$\nabla + A$ with $A\in \mathcal A.$

\begin{theorem} The transformation of $\nabla+A$ under an infinitesimal right action of
 $(X, \alpha)\in \widehat{L_q \frak{g}}$ induces the action $\delta A = \omega_g(X, \bullet) + {\mathcal L}_X A
 +{\mathcal L}_{\bullet} \alpha.$ Here the Lie derivative acting on the from $A$ is composed from the directional derivative (by the left invariant vector field $X$)
 on the argument of $A$ and the commutator $[A,\theta(g;X)].$
 That is, under the infinitesimal transformation $\delta_{(X,0)}$ the transformation is the same as in (\ref{gauge}) using the identification of elements in $L_q\frak{g}$
 as elements in its dual through the inner product $<\cdot,\cdot>.$ 
  The function $\alpha$ can be interpreted as an infinitesimal
 gauge transformation in the line bundle $L$ over $L_qG.$ 
 
\end{theorem}

\begin{proof} Let $(X,\alpha) \in \widehat{L_q\frak g}.$ Infinitesimally, the right action is generated by left invariant vector fields. Thus the 
infinitesimal shift in the direction $(X,\alpha)$,  of the connection evaluated at the tangent vector $Y$ 
is the commutator $[\nabla_Y,\nabla_X +\alpha].$ The first term in the commutator  is equal to the curvature of the circle bundle
evaluated in the directions $X,Y.$ This is in turn equal to 
$\omega_g(X,Y).$  Since the covariant  derivative $\nabla_Y$ acts as the Lie derivative $\mathcal L_Y$ 
on functions, we obtain 

\begin{equation} \label{covariance}
\delta_{(X,\alpha)}  (\nabla +A) = \omega_g(X, \bullet) + {\mathcal L}_X A + {\mathcal L}_{\bullet} \alpha \ ,
\end{equation}
\end{proof}
It follows that we can morally interpret $Q(A)$ as a Dirac operator on the loop
group $L_qG$ twisted by a complex line bundle.

In the case of a central extension the above formulas (on level $k+h^\vee$)
reproduce the classical gauge action, after identification of the
dual  $L\bold{g}^*$ as $L\bold{g}$ using the Killing form,
\begin{equation}
 A \mapsto [A,X] + \frac14(k'+h'^{\vee}) dX,\end{equation}
for a left invariant 1-form $A$.  Namely, the right Lie derivative in  (\ref{covariance}), when acting
on left invariant forms, produces the commutator term $[A,X]$ and whereas the shift by $dX$ is
coming from the central extension $\omega_g(X, \bullet) = c_0(X,\bullet).$ The third term on the 
right-hand-side of the equation (\ref{covariance}) is absent since in case of central extension we can take
$\alpha=$ constant.

Going the other way, starting from the action on $\mathcal A$ in terms of the
cocycle $\omega,$ we may take the restriction to the Lie algebra of
smooth loops and the connection $\nabla$ becomes a sum
$$\nabla = \nabla_0 + \eta$$ 
where $\nabla_0$ is the connection in $P$ defined by the standard
central extension of the loop group and $\eta$ is a 1-form on $LG.$ The
form $\eta$ is actually the 1-cochain relating the central extension to the
abelian extension of $LG.$
Writing now $a = \eta + A$ and $S(X) = S_0(X) + \eta(X)$ we recover
the gauge action formula $a\mapsto [a,X] + \frac14(k'+ h'^{\vee}) dX$ with respect to
$S_0(X).$ Thus the addition of $\eta$ can be viewed as a
renormalization needed for extending from the case of smooth loop algebra to
the fractional loops.

\section*{Acknowledgements}
The authors would like to thank the Erwin Schr\"odinger Institute
for Mathematical Physics for hospitality were this work was initiated.
The work of the second author was partially supported by the grant
516 8/07-08 from Academy of Finland.

\vskip 0.3in
\noindent \textsc{Appendix:} Proof of Theorem \ref{cocycles}

\vskip 0.2in

\noindent Define a 1-cochain by 
\[\tilde \eta_p(X;B) = {\rm Str}\Big(B^{2p+1}X\Big)\]
for $p\geq 0$. Using Palais' formula the coboundary is given by
\begin{eqnarray*} 
(\delta \tilde \eta_p)(X,Y;B)&=& \sum_{k=0}^{2p}{\rm Str}\Big( B^k[B,X]B^{2p-k}Y-B^k[B,Y]B^{2p-k}X\Big)  \\
&& + \sum_{k=0}^{2p}{\rm Str}\Big(B^k {\rm d}XB^{2p-k}Y-B^k {\rm d}YB^{2p-k}X\Big)- {\rm Str}\Big(B^{2p+1}[X,Y]\Big) \ .
\end{eqnarray*}
By cyclicity of trace we can rewrite the first sum, 
\begin{equation}\label{eq1}
	\sum_{k=0}^{2p}{\rm Str}\Big( B^k[B,X]B^{2p-k}Y-B^k[B,Y]B^{2p-k}X\Big) = \end{equation}
\[={\rm Str}\Big( B[X,B^{2p}Y]-B[Y,B^{2p}X]\Big) +
\sum_{k=1}^{2p}{\rm Str}\Big( B^k[B,X]B^{2p-k}Y-B^k[B,Y]B^{2p-k}X\Big) \ .\]
This can be simplified further. Using $[a,bc] = [a,b]c + b[a,c]$ repeatedly we get  
\begin{equation}\label{eq2}
	B[X,B^{2p}Y]= B^{2p+1}[X,Y]+\sum_{k=0}^{2p-1}B^{2p-k}[X,B]B^kY \end{equation}
and inserting \eqref{eq2} into \eqref{eq1} yields
\[2 {\rm Str}\Big( B^{2p+1}[X,Y]\Big) +\sum_{k=0}^{2p-1} 
{\rm Str}\Big(B^{2p-k}[X,B]B^kY- B^{2p-k}[Y,B]B^kX\Big) \]
\[+ \sum_{k=1}^{2p}{\rm Str}\Big( B^k[B,X]B^{2p-k}Y-B^k[B,Y]B^{2p-k}X\Big) = 
2 {\rm Str}\Big( B^{2p+1}[X,Y]\Big) \ .\]
Thus the expression for the coboundary $\delta \tilde \eta_p$ reduces to
\begin{equation}\label{eq3}(\delta \tilde \eta_p)(X,Y;B)= {\rm Str}\left( B^{2p+1}[X,Y] + 
\sum_{k=0}^{2p} B^k {\rm d}XB^{2p-k}Y-B^k {\rm d}YB^{2p-k}X\right) \ . \end{equation}
We split the remaining sum into even and odd powers. When $k=2m$ is even,
\[\sum_{m=0}^{p} {\rm Str}\Big(B^{2m} {\rm d}XB^{2p-2m}Y-B^{2m}{\rm d}YB^{2p-2m}X\Big)=\]
\begin{equation}\label{eq4}= \sum_{m=0}^{p} {\rm Str}\Big(B^{2m} {\rm d}XB^{2p-2m}Y+B^{2m} YB^{2p-2m}{\rm d}X\Big)\end{equation}
and when $k=2m-1$ is odd,
\[\sum_{m=1}^{p} {\rm Str}\Big(B^{2m-1} {\rm d}XB^{2p-2m+1}Y-B^{2m-1}{\rm d}YB^{2p-2m+1}X\Big)=\]
\begin{equation}\label{eq5}= \sum_{m=1}^{p} {\rm Str}\Big(B^{2m-1}XB^{2p-2m+2}Y-B^{2m}XB^{2p-2m+1}Y\Big)  \ ,\end{equation}
where we have used ${\rm d}B^{2m} = 0$, ${\rm d}B^{2m+1} = -B^{2m+2}$ and ``integration by parts" using
\begin{eqnarray*}
{\rm d}(B^{2m-1}XB^{2p-2m+1}Y)&=& -B^{2m}XB^{2p-2m+1}Y-B^{2m-1}{\rm d}XB^{2p-2m+1}Y+\\
&& + B^{2m-1}XB^{2p-2m+2}Y + B^{2m-1}XB^{2p-2m+1}{\rm d}Y \ .
\end{eqnarray*}
Finally we show that the sum in $\tilde c_p(X,Y; B)$ for values $k\geq1$ equals \eqref{eq5}, up to the normalization factor $2^{2p}$;
\[\sum_{k=1}^{p} (-1)^k {\rm Str}\Big( B^{2p-k+1}XB^kY-B^{2p-k+1}YB^kX\Big) =\]
\[= \sum_{m=1}^{p/2}{\rm Str}\Big( B^{2p-2m+2}YB^{2m-1}X-B^{2p-2m+2}XB^{2m-1}Y\Big) \]
\[+ \sum_{m=1}^{p/2}{\rm Str}\Big( B^{2p-2m+1}XB^{2m}Y-B^{2p-2m+1}YB^{2m}X\Big) \]
\begin{equation}\label{eq6} =\sum_{m=1}^{p/2} {\rm Str}\Big( B^{2m-1}XB^{2p-2m+2}Y-B^{2m}XB^{2p-2m+1}Y\Big)  \end{equation}
\[+ \sum_{m=1}^{p/2}{\rm Str}\Big( B^{2p-2m+1}XB^{2m}Y-B^{2p-2m+2}XB^{2m-1}Y\Big) \ .\]
Shifting the index $m=n-\frac{p}{2}$ in the last sum in \eqref{eq6} we get
\[\sum_{n=(p+2)/2}^{p} {\rm Str}\Big( B^{3p-2n+1}XB^{2n-p}Y-B^{3p-2n+2}XB^{2n-p-1}Y\Big)\]
and reversing the summation order using
\[\sum_{\alpha}^{\beta}x^{\pm 2n}= \sum_{\alpha}^{\beta} x^{\pm 2(\alpha+\beta)\mp2n}\]
this becomes
\[\sum_{n=(p+2)/2}^{p}{\rm Str}\Big(  B^{2n-1}XB^{2p-2n+2}Y-B^{2n}XB^{2p-2n+1}Y\Big) \ .\]
Altogether this is precisely the odd part of the sum in \eqref{eq3}. Now we define a new cocycle
\begin{eqnarray*}
c_p(X,Y;B) &=& \tilde c_p(X,Y;B)- 2^{2p} (\delta \tilde \eta_p)(X,Y;B) \\
&=& -2^{2p}\sum_{m=0}^{p} {\rm Str}\Big(B^{2m} {\rm d}XB^{2p-2m}Y-B^{2m}{\rm d}YB^{2p-2m}X\Big)\\
&=& - 2^{2p}\sum_{m=0}^{p} {\rm Str}\Big(B^{2m} {\rm d}XB^{2p-2m}Y+B^{2m} YB^{2p-2m}{\rm d}X\Big) \ .
\end{eqnarray*}
We note that this cocycle vanishes when both $X,Y$ have only negative (or positive) Fourier components, because the trace of $B^n{\rm d}XB^{2m}{\rm d}Y$ and $B^{2m}{\rm d}XB^{2n}Y$ are zero when $X,Y$ are upper (or lower) triangular matrices with respect to the $\epsilon$ grading.  

\bibliographystyle{amsplain}

\end{document}